\setlist{nolistsep}
\newcommand{\mathsfe}{\mathsf e}
\renewcommand{\O}{\mathcal{O}}
\newcommand{\PG}{\textup{PG}}
\newcommand{\T}{\mathcal{T}}
\renewcommand{\H}{\mathcal{H}}
\newcommand{\E}{\mathcal{E}}
\newcommand{\Label}{\label}
\newcommand{\sw}{w}
\renewcommand{\sb}{b}
\newtheorem{theorem}{Theorem}[section]
\newtheorem{lemma}[theorem]{Lemma}
\newtheorem{remark}[theorem]{Remark}
\newenvironment{proof}{\noindent{\bfseries Proof}\hspace{0.5em}}{ \null \hfill $\square$ \par}
\begin{document}

\title{Characterising elliptic solids of $Q(4,q)$, $q$ even}

\author{S.G. Barwick, Alice M.W. Hui and Wen-Ai Jackson
\date{\today}
}

\maketitle

AMS code: 51E20

Keywords: projective geometry, quadrics, hyperplanes

\begin{abstract}
Let $\E$ be a set of solids (hyperplanes) in $\PG(4,q)$, $q$ even, $q>2$, such that every point of $\PG(4,q)$ lies in either $0$, $\frac12q^3$ or $\frac12(q^3-q^2)$ solids of $\E$, and every plane of $\PG(4,q)$ lies in either $0$, $\frac12q$ or $q$ solids of $\E$. This article shows that $\E$ is either the set of solids that are disjoint from a hyperoval, or the set of solids that meet a non-singular quadric $Q(4,q)$ in an elliptic quadric.
\end{abstract}

\section{Introduction}

In this article, we denote  a non-singular quadric of $\PG(4,q)$  by  $Q(4,q)$, and refer to the hyperplanes of $\PG(4,q)$ as solids.
We prove the following characterisations, the second characterises  the solids of $\PG(4,q)$ meeting a non-singular quadric $Q(4,q)$ in an elliptic quadric.

\begin{theorem}\Label{mainthm-1} Let $\E$ be a set of solids   in $\PG(4,q)$, $q$ even, $q>2$, such that
 \begin{itemize}
\item[(I)] every point of $\PG(4,q)$ lies in either $0$, $\frac12q^3$ or $\frac12(q^3-q^2)$ solids of $\E$, and
\item[(II)] every plane of $\PG(4,q)$ lies in either $0$, $\frac12q$ or $q$ solids of $\E$.
\end{itemize}
Then
either $\E$ is the set of solids that are disjoint from a hyperoval, or $\E$ is the set of solids that meet a non-singular quadric $Q(4,q)$ in an elliptic quadric.
\end{theorem}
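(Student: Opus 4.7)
The strategy is to extract from conditions (I)--(II), via repeated double counting, enough combinatorial structure to identify geometrically the two claimed configurations. First, partition the points of $\PG(4,q)$ into types $B$, $G$, $R$ according to whether they lie in $0$, $\tfrac12 q^3$ or $\tfrac12(q^3-q^2)$ solids of $\E$, and analogously partition the planes into three plane types $\Pi_0, \Pi_1, \Pi_2$ using (II). Standard double counts of (point,\,solid), (pair of points,\,solid), (plane,\,solid) and (pair of planes,\,solid) give a linear system that forces $|\E|$, $|B|$, $|G|$, $|R|$ and the $|\Pi_i|$ into (at most) two admissible tuples, corresponding to the hyperoval configuration ($|B|=q+2$) and the elliptic-quadric configuration ($|B|=1$).

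Next, for each line $\ell$ of $\PG(4,q)$ I would count solids of $\E$ through $\ell$ in two ways: via the $q+1$ planes through $\ell$ (using (II)) and via the $q+1$ points on $\ell$ (using (I)). Equating the two counts constrains $(|\ell \cap B|, |\ell \cap G|, |\ell \cap R|)$ to a short list of admissible triples; in particular $|\ell \cap B| \in \{0,1,2\}$. A parallel analysis for each plane $\rho$ pins down $|\rho \cap B|$ and the distribution of its other points between $G$ and $R$. These local restrictions then split the argument on $|B|$. If $|B|=q+2$, the line constraints force $B$ to span a plane $\pi$ in which every line meets $B$ in $0$ or $2$ points, so $B$ is a hyperoval of $\pi$; a short verification against (I)--(II) identifies $\E$ with the set of solids of $\PG(4,q)$ disjoint from $B$. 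If $|B|=1$, say $B=\{N\}$, the line/plane restrictions force every line of $\PG(4,q)$ to meet $R$ in $0$, $1$, $2$ or $q+1$ points and every plane to meet $R \cup \{N\}$ in a classical quadric section (conic, line-pair, line, point, or empty). A quadric characterisation by intersection numbers then identifies $R \cup \{N\}$ with the point set of a parabolic quadric $Q(4,q)$ whose nucleus is $N$, and a short count matches $\E$ with the set of elliptic-section solids.

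The main obstacle is the case $|B|=1$: the point-counts in (I) and plane-counts in (II) are rather coarse, so reaching the level of detail needed to invoke a quadric-characterisation theorem will require several further rounds of double counting — on pencils of planes through a fixed line, on line-in-plane incidences, and on the distribution of $B,G,R$ points within individual planes. Once the quadric is recognised, the identification of $\E$ with its elliptic sections follows directly from the point-incidence count at a generic point of $R$ and $G$.
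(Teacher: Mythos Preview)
Your overall architecture matches the paper's: double-count to reduce to two possible values of $|\E|$, then in one case recognise a hyperoval and in the other invoke Tallini's $(0,1,2,q+1)$-set characterisation to obtain $Q(4,q)$. Two small slips: a line of $\PG(4,q)$ lies in $q^2+q+1$ planes, not $q+1$; and in the quadric case it is $R$ alone (the ``black'' points) that forms $Q(4,q)$, with the unique $B$-point as its nucleus, not $R\cup\{N\}$.

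There is, however, a genuine gap in each branch that your outline does not yet close. In the hyperoval case, your claim that ``line constraints force $B$ to span a plane'' does not follow from the counts you list: knowing $|B|=q+2$ and $|\ell\cap B|\le 2$ only says $B$ is a cap, and $(q+2)$-caps exist in higher-dimensional spaces. The paper handles this by working inside a fixed $\Sigma\in\E$, showing exactly $q+1$ of its planes lie in $q$ solids of $\E$, and observing that each such plane lies in a unique solid $\Gamma_i\notin\E$; since every $B$-point must lie in every $\Gamma_i$, they lie in $\Gamma_1\cap\Gamma_2$, a plane. You will need this or an equivalent argument.

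In the quadric case, the paper's organising device --- which your plan lacks --- is to partition the solids of $\PG(4,q)$ into $\E$, the set $\T$ of solids through the unique $B$-point, and the remainder $\H$, and to analyse the $R$-points inside a solid of each type separately: they form an ovoid in $\E$-solids, a hyperbolic quadric in $\H$-solids (this step itself uses Tallini in dimension $3$), and then the $\T$-solids inherit the $(0,1,2,q+1)$ line property from the other two. Only after this solid-by-solid analysis does one know that every line of $\PG(4,q)$ meets $R$ in $0,1,2$ or $q+1$ points, at which point Tallini applies globally. Your ``several further rounds of double counting'' will almost certainly have to reproduce this trichotomy; it is worth making it explicit rather than hoping it emerges from generic line-and-plane counts.
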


\begin{theorem}\Label{mainthm-2} Let $\E$ be a set of solids   in $\PG(4,q)$, $q$ even, $q>2$, satsifying conditions (I) and (II) of Theorem~\ref{mainthm-1}. If
 \begin{itemize}
\item[(III)] there exists a line of $\PG(4,q)$ that lies in exactly $\frac12 q(q+1)$ solids of $\E$,
\end{itemize}
then
 $\E$ is the set of solids that meet a non-singular quadric $Q(4,q)$ in an elliptic quadric.
\end{theorem}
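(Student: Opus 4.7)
The plan is to invoke Theorem~\ref{mainthm-1} and then use hypothesis (III) to eliminate the hyperoval alternative. By Theorem~\ref{mainthm-1}, either $\E$ is the set of solids disjoint from a hyperoval $\H$ lying in some plane $\pi$ of $\PG(4,q)$, or $\E$ is the set of solids meeting a non-singular $Q(4,q)$ in an elliptic quadric. It therefore suffices to show that in the hyperoval case no line of $\PG(4,q)$ lies in exactly $\tfrac{1}{2}q(q+1)$ solids of $\E$; hypothesis (III) will then force the elliptic case.

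To carry this out I would assume $\E$ arises from a hyperoval $\H\subset\pi$, fix a line $\ell$ of $\PG(4,q)$, and count the solids of $\E$ through $\ell$ according to the position of $\ell$ relative to $\pi$. The guiding observation is that $\dim(\Sigma\cap\pi)\ge 1$ for every solid $\Sigma$, so a solid $\Sigma\supset\ell$ is disjoint from $\H$ iff $\Sigma\not\supset\pi$ and $\Sigma\cap\pi$ is external to $\H$. A natural case split is $\ell\subset\pi$ (secant or external to $\H$), $\ell\cap\pi=\{P\}$ (with $P\in\H$ or $P\notin\H$), and $\ell\cap\pi=\emptyset$. In each case the solids through $\ell$ are parameterised by lines of $\pi$ (either $\ell$ itself, lines of $\pi$ through $P$, or all lines of $\pi$) and one counts those that are external to $\H$. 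Using that a hyperoval in $\PG(2,q)$ with $q$ even has $\tfrac12 q(q-1)$ external lines, and that a point off the hyperoval lies on $q/2$ external lines, the number of solids of $\E$ through $\ell$ comes out to one of $0$, $q^2$, $\tfrac12 q^2$, or $\tfrac12 q(q-1)$.

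A direct check shows that none of these four values equals $\tfrac12 q(q+1)$ for $q>1$, so the hyperoval case is incompatible with hypothesis (III) and the theorem follows. The only step requiring care is the subcase $\ell\cap\pi=\{P\}$ with $P\notin\H$: there the $q^2+q$ solids through $\ell$ not containing $\pi$ partition into $q+1$ classes of size $q$ indexed by the lines of $\pi$ through $P$, and one multiplies $q$ by the $q/2$ external lines through $P$ (which uses that $q$ is even). Beyond this small piece of bookkeeping the argument is immediate, so the main conceptual load really sits in Theorem~\ref{mainthm-1}.
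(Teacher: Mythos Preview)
Your proposal is correct and follows essentially the same route as the paper: invoke Theorem~\ref{mainthm-1}, then eliminate the hyperoval alternative by computing the possible numbers of solids of $\E$ through a line (obtaining $0$, $\tfrac12 q(q-1)$, $\tfrac12 q^2$, $q^2$) and noting that $\tfrac12 q(q+1)$ is not among them. The paper isolates this line-count computation as a separate lemma (Lemma~\ref{lem:new}), whereas you inline it, but the case split and the counts are identical; one cosmetic point is that you use $\H$ for the hyperoval, which clashes with the paper's use of $\H$ for a class of solids, so rename it (e.g.\ $\O$) before merging.
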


For more details on non-singular quadrics, see \cite{HT}. Note that using techniques from \cite{HT}, we can show that  the set of solids that meet a non-singular quadric $Q(4,q)$ in an elliptic quadric satisfies conditions (I), (II) and (III).

There are several known characterisations of the  points, lines, planes and solids relating to $Q(4,q)$; these are surveyed in  \cite{BHJ}.

%
%
%

\section{Preliminary results}

 Let $\E$ be a set of solids in $\PG(4,q)$, $q$ even, $q>2$, such that
 \begin{itemize}
\item[(I)] every point of $\PG(4,q)$ lies in either $0$, $\frac12q^3$ or $\frac12(q^3-q^2)$ solids of $\E$, and \item[(II)] every plane of $\PG(4,q)$ lies in either $0$, $\frac12q$ or $q$ solids of $\E$.
\end{itemize}
By assumption (I), there are three types of points; a point is called a
\begin{itemize}
\item \emph{red point} if it lies in $0$ solids of $\E$,
\item \emph{white point} if it lies in $\frac12q^3$ solids of $\E$,
\item \emph{black point} if it lies in $\frac12(q^3-q^2)$ solids of $\E$.
\end{itemize}
Further, by assumption (I), a solid in $\E$ does not contain any red points, so we partition the solids of $\PG(4,q)$ into $\E$ and two further types of solids:
\begin{itemize}
\item
let $\T$ be the set of solids containing at least one red point,
\item
let $\H$ be the set of solids not in $\E$ that contain no red points.
\end{itemize}

In the following, we will show that there are two possibilities. In case A, the red points form a hyperoval $\mathcal O$ in a plane $\pi$, the white points are the points in $\pi\setminus\mathcal O$, and the black points are the points of $\PG(4,q)\setminus\pi$. Moreover, in this case, $\E$ is the set of solids that do not meet $\mathcal O$, $\T$ is the set of solids that meet $\mathcal O$, and $\H=\emptyset$.
In case B, the black points form a  non-singular quadric $Q(4,q)$ whose nucleus is the unique red point. Moreover, in this case, $\E$, $\H$, $\T$ are  the set of solids meeting $Q(4,q)$ in an elliptic quadric,  a hyperbolic quadric and    a conic cone respectively.

\subsection{Determining $|\E|$}

\begin{lemma}\Label{lem:A-size1}
Let $$\mathsfe = \dfrac{|\E|}{\frac12q^2},$$ then
 $\mathsfe$ is an integer and  $\mathsfe$ is congruent to either $0$ or $-1$ modulo $q$.

\end{lemma}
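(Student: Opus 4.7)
The plan is to extract two independent congruences on $\mathsfe$ modulo $q$ from point–$\E$ incidence counts and combine them. Let $w$ and $b$ denote the numbers of white and black points, respectively.

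First, I would count pairs $(P,S)$ with $P \in S \in \E$. Summing over $P$ by type (red contributes $0$, white $\tfrac12 q^3$, black $\tfrac12(q^3-q^2)$) and over $S \in \E$ (each contains $q^3+q^2+q+1$ points) yields
\[
\tfrac{q^2}{2}\bigl(qw + (q-1)b\bigr) = |\E|(q^3+q^2+q+1).
\]
Since $q^3+q^2+q+1 \equiv 1 \pmod q$, we have $\gcd(q^2, q^3+q^2+q+1)=1$, so $q^2$ divides $2|\E|$. This proves $\mathsfe$ is an integer and gives $\mathsfe(q^3+q^2+q+1) = qw + (q-1)b$. Reducing modulo $q$: $\mathsfe \equiv -b \pmod q$.

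Second, I would count ordered triples $(S_1, S_2, P)$ with $S_1, S_2 \in \E$ and $P \in S_1 \cap S_2$ in two ways. Summing over $(S_1, S_2)$: the diagonal contributes $|\E|(q^3+q^2+q+1)$; for $S_1 \neq S_2$ the intersection $S_1 \cap S_2$ is a plane (two distinct hyperplanes of $\PG(4,q)$ always meet in a plane) with $q^2+q+1$ points, contributing $|\E|(|\E|-1)(q^2+q+1)$. Summing over $P$ by type gives $\sum_P (\text{solids of }\E\text{ through }P)^2 = (q^3/2)^2 w + ((q^3-q^2)/2)^2 b = (q^4/4)(q^2 w + (q-1)^2 b)$. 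Equating the two expressions, substituting $|\E| = q^2 \mathsfe/2$, and simplifying, one obtains
\[
q^2 w + (q-1)^2 b = \mathsfe^2(q^2+q+1) + 2q\mathsfe.
\]
Reducing modulo $q$ gives $b \equiv \mathsfe^2 \pmod q$.

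Combining, $\mathsfe^2 \equiv b \equiv -\mathsfe \pmod q$, so $\mathsfe(\mathsfe+1) \equiv 0 \pmod q$. Since $q = 2^a$ is a prime power and $\gcd(\mathsfe, \mathsfe+1) = 1$, the whole factor $q$ must divide exactly one of $\mathsfe$ and $\mathsfe+1$; hence $\mathsfe \equiv 0$ or $\mathsfe \equiv -1 \pmod q$. The main obstacle is the algebraic cleanup in the second double count—expanding $(q^3-q^2)^2/4$ together with $|\E|(|\E|-1)(q^2+q+1)$ and verifying that after dividing by $q^4/4$ the surviving terms collapse to exactly $\mathsfe^2(q^2+q+1) + 2q\mathsfe$ on the right-hand side.
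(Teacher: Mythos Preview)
Your proof is correct and uses essentially the same approach as the paper: the same two incidence counts (point--solid pairs and point--solid--solid triples), followed by the observation that $q$ is a prime power dividing a product of two coprime integers. The only cosmetic difference is in the final extraction: the paper eliminates $b$ to obtain $wq=\mathsfe\bigl((\mathsfe-(q-1))(q^2+q+1)-q(q^3-q^2-2)\bigr)$ and then splits on the parity of $\mathsfe$, whereas you reduce each count modulo $q$ directly to get $\mathsfe(\mathsfe+1)\equiv 0\pmod q$; the two arguments are equivalent.
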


\begin{proof}
We first show that $\frac12q^2$ divides $|\E|$.
Let $r,w,b$ be the number of red, white and black points of $\PG(4,q)$ respectively.
Count in two ways the incident pairs $(P,\Pi)$ where $\Pi$ is a solid in $ \E$ and $P$ is a point of $\Pi$. We have
\begin{eqnarray}
r.0+w.\tfrac12 q^3 + b.\tfrac12(q^3-q^2)& =& |\E|(q^3+q^2+q+1). \label{eqn:OP}
\end{eqnarray}
As $\frac12q^2$ divides the left hand side and gcd$(q^2,q^3+q^2+q+1)=1$, it follows that $\frac12q^2$ divides $|\E|$, so $\mathsfe$ is an integer.

Now we show that $ \mathsfe$ is congruent to either 0 or $-1$ modulo $q$.
Count in two ways incident triples $(P, \Pi,\Sigma)$ where $\Pi,\Sigma$ are distinct solids of $\E$ and $P$ is a point in $\Pi\cap\Sigma$. We have
\begin{eqnarray}
r.0.0+ w.\tfrac12q^3(\tfrac12q^3-1)+b.\tfrac12(q^3-q^2).(\tfrac12(q^3-q^2)-1)&=&|\E|(|\E|-1)(q^2+q+1).\label{eqn:1}
\end{eqnarray}
Substituting (\ref{eqn:OP}) into (\ref{eqn:1}) to eliminate $b$ and then simplifying gives
\begin{eqnarray*}
wq&=&\mathsfe\left(\left[\mathsfe-(q-1)\right]
(q^2+q+1)-q(q^3-q^2-2)\right).\\
\end{eqnarray*}
Consider the right hand side of this equation. If the first factor $\mathsfe$ is even, then the second factor is odd, and so $\mathsfe$ is congruent to 0 modulo $q$. On the other hand, if the first factor $\mathsfe$ is odd, then the second factor is congruent to 0 modulo $q$, and so $\mathsfe$ is congruent to $-1$ modulo $q$.
\end{proof}

\begin{lemma}\Label{lem:E-size-s}
Each solid in $\E$ contains $s$ black points where  \begin{eqnarray}
s +q&=&(q^2-\mathsfe)(q^2+q+1).\label{eqn:3-pt-A-m}
\end{eqnarray}
\end{lemma}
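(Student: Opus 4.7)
The plan is to fix an arbitrary solid $\Pi \in \E$ and prove that the number of black points in $\Pi$ is the same for every such $\Pi$ by a double count. Since condition (I) guarantees that $\Pi$ contains no red point, we may write $|\Pi| = w_\Pi + s_\Pi = q^3+q^2+q+1$, where $w_\Pi$ and $s_\Pi$ denote the number of white and black points in $\Pi$ respectively. The goal is a single linear equation determining $s_\Pi$ in terms of $q$ and $\mathsfe$ only.

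For this, I would count in two ways the incident pairs $(P,\Sigma)$ with $\Sigma\in\E\setminus\{\Pi\}$ and $P\in\Pi\cap\Sigma$. From the point side, each white point of $\Pi$ contributes $\tfrac12 q^3 - 1$ and each black point of $\Pi$ contributes $\tfrac12(q^3-q^2) - 1$, giving
\[
w_\Pi\left(\tfrac12 q^3 - 1\right) + s_\Pi\left(\tfrac12(q^3-q^2) - 1\right).
\]
From the solid side, two distinct solids of $\PG(4,q)$ meet in a plane, so each $\Sigma\in\E\setminus\{\Pi\}$ contributes $q^2+q+1$, giving $(|\E|-1)(q^2+q+1)$.

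Now I would substitute $w_\Pi = q^3+q^2+q+1 - s_\Pi$ and $|\E| = \tfrac12 q^2\mathsfe$ (from Lemma~\ref{lem:A-size1}) into the resulting identity. The key observation is that once the two sums are equated, the terms $\tfrac12 q^3 - 1$ cancel with themselves and only the coefficient $-\tfrac12 q^2$ of $s_\Pi$ survives, so the equation reduces to a linear relation of the form $\tfrac12 q^2 s_\Pi = F(q,\mathsfe)$ where $F$ depends only on $q$ and $\mathsfe$. In particular $s_\Pi$ is independent of the choice of $\Pi$, so we may denote the common value by $s$. Solving for $s$ and simplifying should match the claimed identity $s+q=(q^2-\mathsfe)(q^2+q+1)$.

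There is no real obstacle here beyond careful arithmetic; the only thing to watch is that the coefficient of $s_\Pi$ really does collapse to $-\tfrac12 q^2$ (so the equation determines $s_\Pi$ uniquely rather than being degenerate), and that the constant term simplifies cleanly to $q^4+q^3+q^2-q-\mathsfe(q^2+q+1)$ so that adding $q$ factors as $(q^2-\mathsfe)(q^2+q+1)$.
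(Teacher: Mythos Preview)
Your proposal is correct and is essentially identical to the paper's own proof: fix a solid of $\E$, use that it has no red points to get $t+s=q^3+q^2+q+1$, then double-count pairs $(P,\Sigma)$ with $\Sigma\in\E\setminus\{\Pi\}$ and $P\in\Pi\cap\Sigma$ to obtain $t(\tfrac12 q^3-1)+s(\tfrac12(q^3-q^2)-1)=(|\E|-1)(q^2+q+1)$, and simplify. The arithmetic you outline (the $-\tfrac12 q^2$ coefficient on $s$ and the resulting factorisation after adding $q$) is exactly what the paper's ``simplifying'' step does.
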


\begin{proof}
Let $\Sigma\in\E$, let $ t $ be the number of white points in $\Sigma$ and let $ s $ be the number of black points in $\Sigma$. As $\Sigma$ does not contain any red points, we have
\begin{eqnarray}
 t + s &=&q^3+q^2+q+1.\label{eqn:3-pt-A}
\end{eqnarray}
Count in two ways incident pairs $(P,\Pi)$ where $\Pi\in\E$, $\Sigma\ne \Pi$ and $P$ is a point in $\Sigma\cap\Pi$. We have
\begin{eqnarray*}
t .(\tfrac12 q^3-1)+ s .(\tfrac12(q^3-q^2)-1)&=&(|\E|-1)(q^2+q+1).
\end{eqnarray*}
Simplifying using (\ref{eqn:3-pt-A})   gives (\ref{eqn:3-pt-A-m}) as required.
\end{proof}

\begin{lemma}\Label{lem:A-size1-strong}
If $\mathsfe \equiv 0 \pmod{q}$, then
\begin{enumerate}
\item $|\E|=\frac12 q^3 (q-1)$,
\item each solid in $\E$ contains $q^3+q^2$ black points and $q+1$ white points.
\end{enumerate}
\end{lemma}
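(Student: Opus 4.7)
The strategy is to pin down the exact value of $\mathsfe$ by combining the congruence condition with the combinatorial constraint on $s$. Under the hypothesis $\mathsfe \equiv 0 \pmod q$, I would write $\mathsfe = kq$ for a non-negative integer $k$, and substitute this into formula~(\ref{eqn:3-pt-A-m}) from Lemma~\ref{lem:E-size-s} to express $s$ as
\[
s \;=\; q(q-k)(q^2+q+1) - q.
\]

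Next I would impose the two trivial bounds on $s$. Since $s$ counts black points, we need $s \geq 0$, which immediately forces $q - k \geq 1$, i.e.\ $k \leq q-1$. Since each solid of $\PG(4,q)$ contains exactly $q^3+q^2+q+1$ points and, by assumption (I), a solid in $\E$ contains no red points, we also need $s \leq q^3+q^2+q+1$. This second inequality rearranges to
\[
q(q-k)(q^2+q+1) \;\leq\; q^3+q^2+2q+1,
\]
and the point is that for any $k \leq q-2$ the left-hand side is at least $2q(q^2+q+1) = 2q^3+2q^2+2q$, which plainly exceeds the right-hand side. So $k$ is squeezed to the unique value $k = q-1$, giving $\mathsfe = q(q-1)$.

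With $\mathsfe$ determined, everything falls out by substitution: the definition $\mathsfe = |\E|/(\tfrac12 q^2)$ yields $|\E| = \tfrac12 q^3(q-1)$, plugging $\mathsfe = q^2-q$ back into (\ref{eqn:3-pt-A-m}) gives $s = q^3+q^2$, and then (\ref{eqn:3-pt-A}) forces $t = (q^3+q^2+q+1) - s = q+1$. I do not anticipate a genuine obstacle here: the work is a one-variable numerical squeeze, and the bookkeeping is short once the formula for $s$ is written as a polynomial in $k$. The only point worth double-checking is that the upper bound on $s$ really is strict enough to eliminate every $k \leq q-2$ simultaneously, but since the excess is at least $q^3$, this is comfortable for all $q \geq 2$.
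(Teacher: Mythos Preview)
Your proof is correct and follows essentially the same approach as the paper's: write $\mathsfe = kq$, substitute into (\ref{eqn:3-pt-A-m}), and use the bounds $0 \le s \le q^3+q^2+q+1$ to force a unique value. The only cosmetic difference is that the paper recasts the squeeze via divisibility (observing $q \mid s+q$ and $q^2+q+1 \mid s+q$, hence $s+q = xq(q^2+q+1)$ with $x$ pinned to $1$), which amounts to the substitution $x = q-k$ in your argument.
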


\begin{proof}
Suppose
$\mathsfe \equiv0 \pmod{q}$, so $\mathsfe=kq$ for some integer $k$, and hence $|\E|=\tfrac12 kq^3$.
By Lemma~\ref{lem:E-size-s}, a solid in $\E$ contains $s$ black points, with $s$ given in (\ref{eqn:3-pt-A-m}).
 Substituting $\mathsfe=kq$ into (\ref{eqn:3-pt-A-m}) gives
\begin{eqnarray*}
 s +q&=&(q^2-kq)(q^2+q+1).
\end{eqnarray*}
So we have $q^2+q+1\bigm| s +q$ and $q\bigm| s +q$. As gcd$(q^2+q+1,q)=1$, it follows that $ s =xq(q^2+q+1)-q$ for some integer $x$. By (\ref{eqn:3-pt-A}), $0\leq s \le q^3+q^2+q+1$, hence $x=1$ and so $ s =q^2 (q + 1)$, $k=q-1$ and
$
|\E|=\frac12 q^3 (q-1).
$
\end{proof}

\begin{lemma}\Label{lem:E-size}
If $\mathsfe \equiv -1 \pmod{q}$, then \begin{enumerate}
\item $|\E|=\frac12 q^2 (q^2-1)$,
\item
each solid in $\E$ contains $q^2+1$ black points and $q^3+q$ white points.
\end{enumerate}
\end{lemma}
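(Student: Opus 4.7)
The plan is to mirror the proof of Lemma~\ref{lem:A-size1-strong}, substituting the new congruence class into the formula from Lemma~\ref{lem:E-size-s} and using the trivial bound on the number of black points in a solid to pin down a single admissible value.

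Concretely, since $\mathsfe \equiv -1 \pmod q$, I would write $\mathsfe = kq - 1$ for some positive integer $k$, so that $|\E| = \tfrac12 q^2(kq-1)$. Substituting $\mathsfe = kq-1$ into (\ref{eqn:3-pt-A-m}) gives
\[
s + q \;=\; (q^2 - kq + 1)(q^2 + q + 1).
\]
Setting $A = q^2 - kq + 1$ and $B = q^2+q+1$, the bounds $0 \le s \le q^3+q^2+q+1$ from (\ref{eqn:3-pt-A}) translate to $q \le AB \le q^3+q^2+2q+1$. Since $qB = q^3+q^2+q$ and $(q+1)B = q^3+2q^2+2q+1$, the upper bound forces $A \le q$, while $AB \ge q$ (with $B > 0$) forces $A \ge 1$. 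So $1 \le q^2 - kq + 1 \le q$, i.e.\ $q^2 - q + 1 \le kq \le q^2$, which (since $k$ is an integer) yields $k = q$.

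Plugging $k = q$ back in gives $\mathsfe = q^2 - 1$, hence $|\E| = \tfrac12 q^2(q^2-1)$, and $A = 1$, so $s + q = q^2+q+1$, i.e.\ $s = q^2 + 1$. Finally, from (\ref{eqn:3-pt-A}) the number of white points in a solid of $\E$ is $t = (q^3+q^2+q+1) - (q^2+1) = q^3+q$, completing both claims.

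There is no real obstacle here: the argument is a direct divisibility/inequality squeeze entirely parallel to Lemma~\ref{lem:A-size1-strong}. The only point requiring mild care is verifying that the window $1 \le A \le q$ leaves exactly one integer value of $k$; once that is done, everything else is a clean substitution.
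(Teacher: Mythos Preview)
Your proof is correct and follows essentially the same approach as the paper: writing $\mathsfe=kq-1$, substituting into (\ref{eqn:3-pt-A-m}), and then using the bounds $0\le s\le q^3+q^2+q+1$ to squeeze $A=q^2-kq+1$ into a range that forces $k=q$. The paper phrases the final squeeze via the divisibility $q\mid A-1$ together with $0<A\le q$, whereas you translate the bounds on $A$ directly into bounds on $k$; these are trivially equivalent.
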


\begin{proof}
If
 $\mathsfe \equiv -1 \pmod{q}$,
then $\mathsfe=kq-1$ for some integer $k$, and hence $|\E|=\frac12 q^2(kq-1)$. We will show that $k=q$.
By Lemma~\ref{lem:E-size-s}, a solid in $\E$ contains $s$ black points, with $s$ given in (\ref{eqn:3-pt-A-m}).
Substituting $\mathsfe=kq-1$ into (\ref{eqn:3-pt-A-m}) gives
\begin{eqnarray}
 s +q&=&(q^2-kq+1)(q^2+q+1).\label{eqn:size-A2}
\end{eqnarray}
So $q^2+q+1\bigm| s +q$, that is, for some integer $x$, we have
\begin{equation}
 s +q=x(q^2+q+1).\label{eqn:size-A1}
\end{equation}
Comparing (\ref{eqn:size-A2}) and (\ref{eqn:size-A1}) we get $x-1=q(q-k)$, so $q\bigm|x-1$.
By   (\ref{eqn:3-pt-A}), $0\leq s \le q^3+q^2+q+1$, hence $0<x\leq q$. It follows that $x=1$ and $k=q$. This gives $s =q^2+1$ and $|\E|=\frac12 q^2(q^2-1)$, as required.
\end{proof}

\subsection{Case A: $\mathsfe \equiv 0 \pmod{q}$, so $|\E|=\frac12 q^3 (q-1)$}\Label{secA}

\begin{lemma}\Label{congA-rwb}
If $|\E|=\frac12 q^3 (q-1)$, then in $\PG(4,q)$, there are $q+2$ red points, $q^2-1$ white points and $q^4+q^3$ black points.
\end{lemma}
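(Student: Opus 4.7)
The plan is to use Lemma~\ref{lem:A-size1-strong} (which gives both $|\E|=\tfrac12q^3(q-1)$ and the fact that every solid of $\E$ contains exactly $q+1$ white points and $q^3+q^2$ black points) together with straightforward double counting of incident pairs (point, solid of $\E$), split by point colour. Let $r,w,b$ denote the number of red, white and black points. The total point count gives one linear equation
\begin{equation*}
r+w+b=q^4+q^3+q^2+q+1.
\end{equation*}

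Next, I would count incident pairs $(P,\Sigma)$ with $\Sigma\in\E$ and $P$ a white point of $\Sigma$ in two ways: each white point lies in $\tfrac12 q^3$ solids of $\E$ (definition of white), and each solid of $\E$ has $q+1$ white points (Lemma~\ref{lem:A-size1-strong}(2)). This yields $w\cdot\tfrac12 q^3=|\E|(q+1)=\tfrac12q^3(q-1)(q+1)$, and hence $w=q^2-1$. Doing the analogous count for black points, using that each black point lies in $\tfrac12(q^3-q^2)$ solids of $\E$ and each solid of $\E$ contains $q^3+q^2$ black points, will give $b\cdot\tfrac12q^2(q-1)=|\E|\,q^2(q+1)$, from which $b=q^3(q+1)=q^4+q^3$.

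Substituting these into the total point count yields $r=(q^4+q^3+q^2+q+1)-(q^2-1)-(q^4+q^3)=q+2$, as required. There is no real obstacle here; the only thing to verify is that the two double counts are consistent with the previously established identity~(\ref{eqn:OP}), which they automatically are since (\ref{eqn:OP}) is just the sum of the two pair counts. So the lemma reduces to a few lines of arithmetic once Lemma~\ref{lem:A-size1-strong} is in hand.
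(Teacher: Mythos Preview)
Your argument is correct. It differs from the paper's route in a useful way: the paper does not invoke Lemma~\ref{lem:A-size1-strong}(2) at all, but instead substitutes $|\E|=\tfrac12q^3(q-1)$ into the two global identities~(\ref{eqn:OP}) and~(\ref{eqn:1}) (the pair and triple counts established earlier) and solves the resulting $2\times 2$ linear system for $w$ and $b$. Your approach is more direct: once Lemma~\ref{lem:A-size1-strong}(2) has pinned down the number of white (resp.\ black) points in each solid of $\E$, the colour-by-colour double count of incident pairs $(P,\Sigma)$ with $\Sigma\in\E$ immediately isolates $w$ and $b$ without ever touching the triple count~(\ref{eqn:1}). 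The trade-off is that you are relying on Lemma~\ref{lem:A-size1-strong}, which itself required some work (via Lemma~\ref{lem:E-size-s}); the paper's proof of Lemma~\ref{congA-rwb} is self-contained given only~(\ref{eqn:OP}) and~(\ref{eqn:1}), at the cost of slightly messier algebra. One small point of phrasing: the hypothesis of Lemma~\ref{congA-rwb} is $|\E|=\tfrac12q^3(q-1)$, which gives $\mathsfe=q(q-1)\equiv 0\pmod q$, so you are entitled to apply Lemma~\ref{lem:A-size1-strong}; you might make that implication explicit rather than saying Lemma~\ref{lem:A-size1-strong} ``gives'' the value of $|\E|$.
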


\begin{proof} Let $r$ be the number of red points, $w$ be the number of white points, and $b$ be the number of black points of $\PG(4,q)$.
Substituting $|\E|=\frac12 q^3 (q-1)$ into (\ref{eqn:OP}) gives
\begin{eqnarray}
\sw q + \sb (q-1)&=& q(q-1)(q^3+q^2+q+1). \label{eqn:4-rev}
\end{eqnarray}
Substituting $|\E|=\frac12 q^3 (q-1)$ into (\ref{eqn:1}) and dividing by $\frac14 q^2$ gives
\begin{eqnarray}
wq(q^3-2) + b(q-1)(q^3-q^2-2) &=& q(q-1)(q^3(q-1)-2)(q^2+q+1).\label{eqn:5-rev}
\end{eqnarray}
Multiplying (\ref{eqn:4-rev}) by $q^3-2$ and subtracting (\ref{eqn:5-rev}) gives
\[
q^2(q-1)b= q (q - 1)\left[( q^3 - 2)(q^3+q^2 + q+1)-(q^4-q^3-2)( q^2 + q + 1)\right]
\]
giving $\sb =q^4 + q^3 $. Using (\ref{eqn:4-rev}) we get $w=q^2-1$. As $b+w+r=q^4+q^3+q^2+q+1$, we have $r=q+2$.
\end{proof}

\begin{lemma}\Label{congA-red}
If $|\E|=\frac12 q^3 (q-1)$, then the $q+2$ red points of $\PG(4,q)$ lie in a plane.
\end{lemma}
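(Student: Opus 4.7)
The plan is to exploit condition~(II) by classifying planes according to the number of solids of~$\E$ containing them, and to use the resulting counts to locate a plane containing all red points. A first observation is that a plane containing at least one red point lies in $0$ solids of~$\E$, since every solid through such a plane contains that red point. Hence the $\E$-incidence conditions in~(II) are borne entirely by $0$-red planes; let $P_0^{1/2}$ and $P_0^1$ denote the number of $0$-red planes lying in exactly $\tfrac12 q$ and $q$ solids of~$\E$, respectively.

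Two double counts pin down these values. First, counting incident pairs (plane, $\E$-solid containing it), using that each solid contains $q^3+q^2+q+1$ planes, gives $\tfrac{q}{2}P_0^{1/2} + qP_0^1 = |\E|(q^3+q^2+q+1)$. Second, counting unordered pairs of distinct $\E$-solids by their intersection (a plane, by the dimension formula for solids in $\PG(4,q)$, and necessarily $0$-red since both solids lie in~$\E$) gives $\binom{q/2}{2}P_0^{1/2} + \binom{q}{2}P_0^1 = \binom{|\E|}{2}$. Solving this $2\times 2$ system using $|\E| = \tfrac12 q^3(q-1)$ yields $P_0^1 = \tfrac{q^2(q^2-1)}{2}$. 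In particular such planes exist; fix one, call it~$\sigma$. The $q+1$ solids through~$\sigma$ partition $\PG(4,q) \setminus \sigma$, so each of the $q+2$ red points (all outside~$\sigma$) lies in exactly one of these solids. Since $q$ of them are in~$\E$ and hence contain no red points, the remaining solid $\Sigma_\sigma$ must contain all $q+2$ red points.

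The main obstacle is then to show that $\Sigma_\sigma$ genuinely depends on~$\sigma$. If all $\Sigma_\sigma$ coincided with one solid~$\Sigma_0$, every one of the $P_0^1$ such planes would lie in~$\Sigma_0$, forcing $P_0^1 \le q^3+q^2+q+1$, the number of planes in a $\PG(3,q)$. However, $q$ even with $q>2$ gives $q\ge 4$, and a direct comparison shows $\tfrac{q^2(q^2-1)}{2} > q^3+q^2+q+1$. Hence two planes $\sigma_1,\sigma_2$ with $\Sigma_{\sigma_1}\ne\Sigma_{\sigma_2}$ must exist; both contain all $q+2$ red points, so the red points lie in $\Sigma_{\sigma_1} \cap \Sigma_{\sigma_2}$, which is a plane of $\PG(4,q)$.
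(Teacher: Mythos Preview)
Your proof is correct, and the core mechanism matches the paper's: locate planes lying in exactly $q$ solids of $\E$, observe that the one residual solid through such a plane must contain all $q+2$ red points, then find two distinct such residual solids and intersect them. The difference is in how you carry out the count and how you force the two residual solids to be distinct. The paper works \emph{locally} inside a fixed $\Sigma\in\E$: counting planes of $\Sigma$ by type gives exactly $q+1$ planes $\alpha_1,\dots,\alpha_{q+1}$ of $\Sigma$ lying in $q$ solids of $\E$, and then the distinctness of $\Gamma_1,\Gamma_2$ is structural---since $\Gamma_i\cap\Sigma=\alpha_i$ and $\alpha_1\ne\alpha_2$, one cannot have $\Gamma_1=\Gamma_2$. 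You instead work \emph{globally}, solving a $2\times2$ system to get $P_0^1=\tfrac{q^2(q^2-1)}{2}$ planes in all of $\PG(4,q)$ that lie in $q$ solids of $\E$, and then use a numerical pigeonhole (this exceeds the $q^3+q^2+q+1$ planes of any single solid when $q\ge 4$) to force two distinct residual solids. The paper's route is a bit lighter computationally and the distinctness step is cleaner; your route avoids choosing a particular $\Sigma\in\E$ and gives the total count of such planes as a by-product.
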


\begin{proof}
By assumption (II), there are three types of planes in $\PG(4,q)$. Let $\Sigma\in\E$ and let $x,\ y,\ z$ denote the number of planes in $\Sigma$ that lie in $0,\ \frac12q, \ q$ solids of $\E$ respectively. Noting that $x=0$,
 we have
\begin{equation}
y+z=q^3+q^2+q+1.\label{eqn:yz}
\end{equation}
Counting in two ways the pairs $(\pi,\Pi)$ where $\pi$ is a plane in $\Sigma$, and $\Pi$ is a solid containing $\pi$, with $\Pi\in\E$ and $\Pi\neq\Sigma$ gives \begin{eqnarray}
y.(\tfrac12q-1)+z(q-1)&=&(|\E|-1).1.\nonumber 
\end{eqnarray}
Using $|\E|=\frac12 q^3 (q-1)$ and (\ref{eqn:yz}) gives $z=q+1$ and $y=q^3+q^2$.
Let $\alpha_1,\ldots,\alpha_{q+1}$ be the $z=q+1$ planes in $\Sigma$ which lie in $q$ solids of $\E$ and so in one solid not in $\E$. For $i=1,\ldots,q+1$, let $\Gamma_i$ denote the unique solid not in $\E$ that contains $\alpha_i$. Recall that the solids in $\E$ contain no red points. For any fixed $i$, the solids containing $\alpha_i$ cover all the points of $\PG(4,q)$, so the $r=q+2$ red points lie in $\Gamma_i$. In particular, as $\alpha_1,\alpha_2$ are distinct planes in $\Sigma$, $\pi=\Gamma_1\cap\Gamma_2$ is a plane which is not contained in $\Sigma$. The $r=q+2$ red points lie in $\Gamma_1$ and $\Gamma_2$, hence they lie in $\pi$.
 \end{proof}

\begin{lemma}\Label{A-red-noblack}
If $|\E|=\frac12 q^3 (q-1)$, then the plane $\pi$ containing the $q+2$ red points contains no black points.
\end{lemma}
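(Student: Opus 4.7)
My plan is to do a straightforward double count of incidences between solids of $\E$ and the points of $\pi$, and show that the only solution consistent with the global point counts from Lemma~\ref{congA-rwb} forces the number of black points in $\pi$ to vanish.

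First I would observe that $\pi$ cannot be contained in any solid $\Sigma\in\E$: since $\pi$ contains the $q+2$ red points while $\Sigma$ contains none (solids of $\E$ have no red points, by definition of ``red''), the containment $\pi\subseteq\Sigma$ is impossible. Consequently, for every $\Sigma\in\E$, the intersection $\Sigma\cap\pi$ is a line, and so contains exactly $q+1$ points of $\pi$.

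Let $w_\pi$ and $b_\pi$ denote the number of white and black points of $\pi$, respectively. Since $\pi$ has $q^2+q+1$ points, of which $q+2$ are red, we have $w_\pi+b_\pi=q^2-1$. Now I would count in two ways the incident pairs $(P,\Sigma)$ with $P\in\pi$, $\Sigma\in\E$, and $P\in\Sigma$. Using assumption (I) from one side and the ``$\Sigma\cap\pi$ is a line'' observation from the other, together with $|\E|=\tfrac12 q^3(q-1)$, this yields
\begin{equation*}
w_\pi\cdot\tfrac12 q^3+b_\pi\cdot\tfrac12(q^3-q^2)=|\E|(q+1)=\tfrac12 q^3(q^2-1).
\end{equation*}
After clearing the factor $\tfrac12 q^2$, this simplifies to $w_\pi q+b_\pi(q-1)=q(q^2-1)$, which, when combined with $w_\pi+b_\pi=q^2-1$, collapses to $b_\pi=0$.

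There is no real obstacle here; the argument is a two-equation linear system, and the only thing to be careful about is the ``$\pi\not\subseteq\Sigma$'' step that licences the line intersection. As an immediate byproduct, since $w_\pi=q^2-1$ equals the total number of white points in $\PG(4,q)$ (by Lemma~\ref{congA-rwb}), this shows that every white point of $\PG(4,q)$ lies in $\pi$, which will be convenient for the subsequent lemmas identifying the configuration in Case A as a hyperoval.
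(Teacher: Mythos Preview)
Your proof is correct and follows essentially the same approach as the paper: both observe that $\pi$ lies in no solid of $\E$ (so each $\Sigma\in\E$ meets $\pi$ in a line), then double count incident pairs $(P,\Sigma)$ with $P\in\pi$ and $\Sigma\in\E$, and solve the resulting linear system in the white and black counts of $\pi$ to get $b_\pi=0$. The additional remark that $w_\pi=q^2-1$ exhausts all white points of $\PG(4,q)$ is a nice observation not made explicit in the paper.
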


\begin{proof}
By Lemma~\ref{congA-red}, there are $q+2$ red points which lie in a plane $\pi$.
Let $u$ be the number of white points in $\pi$ and $v$ the number of black points in $\pi$. As the number of red points in $\pi$ is $q+2$,
we have $u+v=(q^2+q+1)-(q+2)$, that is,
\begin{eqnarray}
u+v&=&q^2-1.  \label{eqn:wd-bd}
\end{eqnarray}
As solids in $\E$ contain no red points, $\pi$ does not lie in a solid of $\E$.
 Hence each solid in $\E$ meets $\pi$ in a line. We now count in two ways the incident pairs $(P,\Pi)$ where $\Pi$ is a solid in $\E$ and $P$ is a point in $\pi\cap\Pi$.
 Using assumption (I), we have
\begin{eqnarray*}
(q+2).0+u.(\tfrac12q^3)+v.(\tfrac12(q^3-q^2))&=&|\E|.(q+1)
\end{eqnarray*}
 Using $|\E|=\frac12 q^3 (q-1)$ and (\ref{eqn:wd-bd}) gives $v=0$.
\end{proof}

\begin{lemma}\Label{A-red-hyperoval}
If $|\E|=\frac12 q^3 (q-1)$, then the $q+2$ red points of $\PG(4,q)$ form a hyperoval.
\end{lemma}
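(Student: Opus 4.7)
The goal is to show that the $q+2$ red points in the plane $\pi$ form a hyperoval, i.e.\ no three of them are collinear (since any $q+2$ points in $\PG(2,q)$, $q$ even, with no three collinear is a hyperoval by definition). Using Lemma~\ref{A-red-noblack}, the plane $\pi$ contains only red and white points: $q+2$ red and $q^2-1$ white. So for any line $\ell \subset \pi$, setting $r_\ell$ to be the number of red points on $\ell$, the number of white points is $q+1-r_\ell$. The plan is to derive an inequality on $r_\ell$ by double-counting.

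First I would observe that since $\pi$ is not contained in any solid of $\E$ (solids of $\E$ have no red points, and $\pi$ has red points), the dimension formula shows that \emph{every} solid of $\E$ meets $\pi$ in a line. Thus, for a fixed line $\ell \subset \pi$, each solid of $\E$ either contains $\ell$ or meets $\ell$ in a single point. Let $a$ denote the number of solids of $\E$ through $\ell$ and $b$ the number meeting $\ell$ in exactly one point, so $a+b=|\E|=\tfrac12 q^3(q-1)$.

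Next I would count in two ways the incident pairs $(P,\Pi)$ where $P \in \ell$ and $\Pi \in \E$ contains $P$. Using condition (I) and $r_\ell+(q+1-r_\ell) = q+1$, the sum equals $\tfrac12 q^3(q+1-r_\ell)$. Counting from the solid side gives $a(q+1)+b$. Subtracting $a+b=|\E|$ yields
\[
aq \;=\; \tfrac12 q^3(q+1-r_\ell) \,-\, \tfrac12 q^3(q-1) \;=\; \tfrac12 q^3(2-r_\ell),
\]
so $a=\tfrac12 q^2(2-r_\ell)$. Since $a \geq 0$, we must have $r_\ell \leq 2$; that is, no three red points are collinear. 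Combined with $|{\rm red}|=q+2$ from Lemma~\ref{congA-rwb} and Lemma~\ref{congA-red}, this shows the red points form a hyperoval of $\pi$.

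I do not expect a real obstacle: the crucial ingredients (the counts of red/white/black points, planarity of the red set, and absence of black points in $\pi$) are already in place, so the proof reduces to the single double-count above. The only slight subtlety is noting that a solid of $\E$ must meet $\pi$ in a line (not in all of $\pi$, and by dimension not in fewer dimensions), so the dichotomy ``contains $\ell$ or meets $\ell$ in one point'' is exhaustive.
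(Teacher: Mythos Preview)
Your proof is correct and uses essentially the same double-count as the paper. The only cosmetic difference is that the paper restricts to lines $\ell\subset\pi$ with at least one red point, which forces $a=0$ (no solid of $\E$ contains such $\ell$) and yields the exact equality $r_\ell=2$; you instead allow arbitrary $\ell\subset\pi$, solve for $a=\tfrac12 q^2(2-r_\ell)$, and use $a\ge 0$ to get $r_\ell\le 2$---either way the hyperoval conclusion follows immediately.
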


\begin{proof} By Lemma~\ref{congA-red}, there are $q+2$ red points of $PG(4,q)$ and they lie in a plane $\pi$. Further, $\pi$ contains 0 black points and $q^2-1$ white points Lemma~\ref{A-red-noblack}. Let $\ell$ be a line of $\pi$ which contains $x\ge1$ red points, so $\ell$ contains $q+1-x$ white points.
As $\ell$ contains a red point, $\ell$ does not lie in a solid of $\E$, so each solid of $\E$ meets $\ell$ in a white point.
We count in two ways incident pairs $(P,\Pi)$ where $\Pi\in\E$ and $P$ is a white point in $\ell\cap\Pi$.  Using assumption (I), we have
 $$(q+1-x). \tfrac12q^3=\tfrac12q^3(q-1).1$$
and so $x=2$. Hence the $q+2$ points in $\pi$ are no three collinear, so they form a hyperoval.
\end{proof}

\begin{lemma}\Label{A-red-hyperoval-B}
If $|\E|=\frac12 q^3 (q-1)$, then $\E$ is the set of solids that are disjoint from a hyperoval.
\end{lemma}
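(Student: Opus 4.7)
The plan is to show that $\E$ coincides exactly with the set of all solids of $\PG(4,q)$ that are disjoint from the hyperoval $\mathcal{O}$ formed by the $q+2$ red points in the plane $\pi$ (by Lemmas~\ref{congA-red} and \ref{A-red-hyperoval}). Since every solid in $\E$ contains no red points by definition, each solid of $\E$ is automatically disjoint from $\mathcal{O}$; thus $\E$ is contained in the set of solids disjoint from $\mathcal{O}$. It will therefore suffice to verify that the number of solids of $\PG(4,q)$ disjoint from $\mathcal{O}$ equals $|\E|=\tfrac12 q^3(q-1)$.

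To count the solids disjoint from $\mathcal{O}$, I would first analyse how an arbitrary solid $\Sigma$ meets $\pi$. Since $\dim\Sigma+\dim\pi-4=1$, the intersection $\Sigma\cap\pi$ is either the whole plane $\pi$ (in which case $\Sigma\supseteq\mathcal{O}$) or a line $\ell$ of $\pi$ (in which case $\Sigma\cap\mathcal{O}=\ell\cap\mathcal{O}$). Consequently $\Sigma$ is disjoint from $\mathcal{O}$ if and only if $\Sigma\cap\pi$ is a line of $\pi$ that is external to $\mathcal{O}$.

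The remaining step is a standard double count. Since $\mathcal{O}$ is a hyperoval, every line of $\pi$ meets $\mathcal{O}$ in either $0$ or $2$ points, giving $\binom{q+2}{2}$ secants and hence $(q^2+q+1)-\binom{q+2}{2}=\tfrac12 q(q-1)$ external lines. For a fixed external line $\ell$, the number of solids $\Sigma$ with $\Sigma\cap\pi=\ell$ equals the number of solids containing $\ell$ minus the number of solids containing $\pi$, namely $(q^2+q+1)-(q+1)=q^2$. Multiplying gives $\tfrac12 q(q-1)\cdot q^2=\tfrac12 q^3(q-1)=|\E|$, which forces equality of the two sets.

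The proof is essentially a counting argument once the structural lemmas are in hand, so there is no serious obstacle. The only point requiring care is the bookkeeping in $\PG(4,q)$ for the number of solids through a prescribed line versus a prescribed plane, to ensure that solids of the form $\Sigma\supseteq\pi$ (which automatically contain $\mathcal{O}$) are correctly excluded from the count.
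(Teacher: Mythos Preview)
Your proof is correct and follows essentially the same approach as the paper: show $\E$ is contained in the set of solids disjoint from the hyperoval $\mathcal{O}$, then count that the latter set has exactly $\tfrac12 q(q-1)\cdot q^2=|\E|$ elements by enumerating external lines of $\mathcal{O}$ in $\pi$ and the solids meeting $\pi$ precisely in such a line. The only cosmetic difference is that you write the number of secants as $\binom{q+2}{2}$ while the paper writes $\tfrac12(q+2)(q+1)$.
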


\begin{proof}
By  Lemma~\ref{A-red-hyperoval}, there are $q+2$ red points of $PG(4,q)$ and they form a hyperoval $\O$ in a plane $\pi$.
Let $\Sigma$ be a solid in $\E$. As $\Sigma$ contains no red points, it does not contain $\pi$ and $\Sigma\cap\pi$ is a line that is
disjoint from $\O$.

Conversely we count the solids in $\PG(4,q)$ that are
disjoint from $\O$, that is, we count the number of solids of $\PG(4,q)$ that meet $\pi$ in a line containing 0 red points. The number of    lines of $\pi$ that are external to $\O$ is $q^2+q+1-\frac12 (q+2)(q+1)=\frac12(q^2-q)$. Through each of these external lines, there are $q^2+q+1$ solids of $\PG(4,q)$, $q+1$ of which contain $\pi$.
Hence the number of solids of $\PG(4,q)$ that meet $\pi$ in a line containing 0 red points is  $\frac12(q^2-q).[(q^2+1+1)-(q+1)] = \frac12(q^2-q).q^2$   which is equal to $|\E|$.
Hence $\E$ is the set of solids that are disjoint from $\O$.
\end{proof}

\subsection{Case B: $\mathsfe \equiv -1 \pmod{q}$ and so $|\E|=\frac12 q^2 (q^2-1)$}\Label{secB}

We now count the number of points of each colour in $\PG(4,q)$.

\begin{lemma} \label{item:pt-size}
If $|\E|=\frac12 q^2 (q^2-1)$, then in $\PG(4,q)$, there are $q^3+q^2+q+1$ black points, $q^4-1$ white points and one red point.
\end{lemma}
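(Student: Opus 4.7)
The plan is to mimic the argument of Lemma~\ref{congA-rwb}, with the new value $|\E|=\tfrac12 q^2(q^2-1)$ in place of $\tfrac12 q^3(q-1)$. Let $r,w,b$ denote the number of red, white and black points of $\PG(4,q)$ respectively; these satisfy the trivial identity
\[
r+w+b = q^4+q^3+q^2+q+1,
\]
together with the two double-counting identities (\ref{eqn:OP}) and (\ref{eqn:1}) that were set up in Lemma~\ref{lem:A-size1}. Since red points contribute $0$ in both of these identities, once $|\E|$ is fixed each of (\ref{eqn:OP}) and (\ref{eqn:1}) becomes a \emph{linear} equation in the two unknowns $w$ and $b$. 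So the strategy is to substitute $|\E|=\tfrac12q^2(q^2-1)$ into both, solve the resulting $2\times 2$ linear system for $(w,b)$, and then recover $r$ from the total point count.

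First I would substitute $|\E|=\tfrac12q^2(q^2-1)$ into (\ref{eqn:OP}) and divide by $\tfrac12q^2$ to obtain the clean linear relation
\[
wq + b(q-1) = (q^2-1)(q^3+q^2+q+1).
\]
Next I would substitute into (\ref{eqn:1}), expand $|\E|(|\E|-1)$ and divide by $\tfrac14 q^2$, to obtain a second linear relation of the form
\[
wq(q^3-2) + b(q-1)(q^3-q^2-2) = (q^2-1)\bigl(q^2(q^2-1)-2\bigr)(q^2+q+1).
\]
Then, following the same move as in Lemma~\ref{congA-rwb}, I would multiply the first relation by $(q^3-2)$ and subtract the second to eliminate $w$; this isolates $b$ and, after simplification, yields $b = q^3+q^2+q+1$. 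Substituting back into the first relation gives $w = q^4-1$, and finally
\[
r = (q^4+q^3+q^2+q+1) - w - b = 1.
\]

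There is no real obstacle here: once $|\E|$ is pinned down by Lemma~\ref{lem:E-size}, the two counting identities of Lemma~\ref{lem:A-size1} already give an invertible linear system in $(w,b)$, and the proof is purely a matter of careful arithmetic. The only thing to verify along the way is that the coefficient determinant is nonzero, i.e. that $q(q-1)(q^3-q^2-2)-q(q-1)(q^3-2)=-q^3(q-1)\ne 0$, which holds for all $q\ge 2$; this guarantees that the system has a unique solution and so the values of $r$, $w$, $b$ above are forced.
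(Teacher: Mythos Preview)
Your argument is correct, but the paper takes a shorter route. Instead of solving the $2\times 2$ system built from (\ref{eqn:OP}) and (\ref{eqn:1}), the paper exploits part~2 of Lemma~\ref{lem:E-size}, which has already shown that every $\Pi\in\E$ contains exactly $q^2+1$ black points. Counting incident pairs $(P,\Pi)$ with $\Pi\in\E$ and $P$ a \emph{black} point of $\Pi$ then gives the single equation
\[
b\cdot\tfrac12(q^3-q^2)=|\E|\cdot(q^2+1)=\tfrac12 q^2(q^2-1)(q^2+1),
\]
from which $b=q^3+q^2+q+1$ drops out immediately; $w$ and $r$ follow from (\ref{eqn:OP}) and the total point count. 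Your approach has the mild advantage of being parallel to Lemma~\ref{congA-rwb} and of not invoking the per-solid black-point count, at the cost of a heavier elimination step; the paper's approach buys brevity by cashing in a lemma already proved for Case~B.
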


\begin{proof}
In $\PG(4,q)$, let $r,w,b$ be the number of red, white and black points respectively.
Count in two ways incident pairs
 $(P,\Pi)$ where $\Pi\in\E$ and $P$ is black point in $\Pi$.
 By Lemma~\ref{lem:E-size},
 $\Pi$ contains $q^2+1$ black points, so using assumption (I), we have
\begin{eqnarray*}
b. \tfrac12(q^3-q^2)=\tfrac12q^2(q^2-1).(q^2+1)
\end{eqnarray*}
giving $b=q^3+q^2+q+1$. Substituting this and $|\E|=\frac12q^2(q^2-1)$ into (\ref{eqn:OP}) gives $w=q^4-1$. As $b+w+r=q^4+q^3+q^2+q+1$, we also have $r=1$.
\end{proof}

Recall that we partition the solids of $\PG(4,q)$ into three sets: $\E$, $\T$ (the solids containing the red point), and $\H$ (the solids not in $\E$ which do not contain the red point). In Lemma~\ref{lem:E-size} we counted the solids in $\E$. We now do the same for $\T$ and $\H$.
\begin{lemma}\Label{lem:HT-size}
If $|\E|=\frac12 q^2 (q^2-1)$, then \begin{enumerate}
\item Each solid in $\H$ contains $(q+1)^2$ black points, and $|\H|=\frac12q^2(q^2+1)$;
\item Each solid in $\T$ contains $q^2+q+1$ black points, and $|\T|=q^3+q^2+q+1$.
\end{enumerate}
\end{lemma}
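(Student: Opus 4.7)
The plan is to obtain the cardinalities of $\T$ and $\H$ first by a counting argument using the partition of the solids of $\PG(4,q)$, and then extract the black-point counts in the two solid-types via the same double-counting template used in Lemma~\ref{lem:E-size-s}.

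For $|\T|$, Lemma~\ref{item:pt-size} gives exactly one red point $R$, and by definition $\T$ is the set of solids containing at least one red point; hence $\T$ is precisely the set of solids of $\PG(4,q)$ through $R$, giving $|\T|=q^3+q^2+q+1$. Since $\E$, $\T$, $\H$ partition the $q^4+q^3+q^2+q+1$ solids of $\PG(4,q)$, subtraction yields
\begin{eqnarray*}
|\H| &=& (q^4+q^3+q^2+q+1) - \tfrac12 q^2(q^2-1) - (q^3+q^2+q+1) \\
&=& \tfrac12 q^2(q^2+1).
\end{eqnarray*}

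To count black points in a solid $\Pi \in \T \cup \H$, I would let $w_\Pi, b_\Pi$ denote the number of white and black points of $\Pi$; then $w_\Pi+b_\Pi=q^3+q^2+q$ if $\Pi\in\T$ (since $\Pi$ contains the one red point) and $w_\Pi+b_\Pi=q^3+q^2+q+1$ if $\Pi\in\H$. Counting in two ways the incident pairs $(P,\Sigma)$ with $\Sigma\in\E$ and $P\in\Pi\cap\Sigma$: since $\Sigma\ne\Pi$, the intersection $\Sigma\cap\Pi$ is a plane, so the total equals $|\E|(q^2+q+1)$; summing instead over points of $\Pi$ via condition (I) gives $\tfrac12 q^3\, w_\Pi+\tfrac12(q^3-q^2)\, b_\Pi$. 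Solving the resulting linear system (substituting the appropriate point-count identity) yields $b_\Pi=q^2+q+1$ when $\Pi\in\T$ and $b_\Pi=(q+1)^2$ when $\Pi\in\H$.

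I do not anticipate a real obstacle: the only care is in identifying $\T$ with the set of solids through the unique red point (using Lemma~\ref{item:pt-size}) and in noting that $\Sigma$ and $\Pi$ are automatically distinct in the double count because $\E\cap(\T\cup\H)=\emptyset$, so that $\Sigma\cap\Pi$ is always a full plane of $q^2+q+1$ points.
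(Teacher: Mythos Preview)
Your proposal is correct and follows essentially the same approach as the paper: derive $|\T|$ from the uniqueness of the red point (Lemma~\ref{item:pt-size}), obtain $|\H|$ by subtraction from the partition of all solids, and then for each $\Pi\in\H$ or $\Pi\in\T$ double-count pairs $(P,\Sigma)$ with $\Sigma\in\E$ and $P\in\Sigma\cap\Pi$ against the point-type identity $w_\Pi+b_\Pi=q^3+q^2+q+1$ (resp.\ $q^3+q^2+q$). Your remark that $\Sigma\ne\Pi$ is automatic because $\E\cap(\T\cup\H)=\emptyset$ is exactly the justification the paper leaves implicit.
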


%
%
%
%

\begin{proof} By Lemma~\ref{item:pt-size} there is a unique red point, so we have $|\T|=q^3+q^2+q+1$ (the number of solids through a point). Using $|\E|=\frac12q^2(q^2-1)$ and $|\E|+|\H|+|\T|=q^4+q^3+q^2+q+1$ gives $|\H|=\frac12q^2(q^2+1)$ as required.

Let $\Sigma\in\H$, let $s $ be the number of black points in $\Sigma$ and let $t$ be the number of white points in $\Sigma$. As solids in $\H$ do not contain a red point, we have
\begin{eqnarray}
 t + s &=&q^3+q^2+q+1.\label{eqn:Ept}
\end{eqnarray}
Count in two ways incident pairs $(P,\Pi)$ where $\Pi\in\E$, and $P$ is a point in $\Pi\cap\Sigma$. Using assumption (I), we have
\begin{eqnarray*}
 t .\tfrac12 q^3+ s .\tfrac12(q^3-q^2)&=&|\E|(q^2+q+1).
\end{eqnarray*}
Simplifying using (\ref{eqn:Ept}) gives $ s =(q+1)^2$
proving part 1.

For part 2, let $\Sigma\in\T$, let $ s $ be the number of black points in $\Sigma$ and let $ t $ be the number of white points in $\Sigma$. As solids in $\T$ contain the unique red point, we have
\begin{eqnarray}
 t + s &=&q^3+q^2+q.\label{eqn-bing}
\end{eqnarray}
 Count in two ways incident pairs $(P,\Pi)$ where $\Pi\in\E$, and $P$ is a point in $\Pi\cap\Sigma$, As $\Pi\in\E$, it does not contain the red point, so using assumption (I), we have
 \begin{eqnarray*}
 t .\tfrac12 q^3+ s .\tfrac12(q^3-q^2)&=&|\E|(q^2+q+1).
\end{eqnarray*}
Simplifying using (\ref{eqn-bing}) gives $ s =q^2+q+1$ as required.
\end{proof}

\begin{lemma} \label{lem:pl-thru-N}
If $|\E|=\frac12 q^2 (q^2-1)$, then a plane containing the unique red point of $\PG(4,q)$ contains exactly $q+1$ black points.
\end{lemma}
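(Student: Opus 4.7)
Let $\pi$ be a plane through the unique red point $N$, and write $u$ for the number of white points and $v$ for the number of black points in $\pi$. The plan is to set up two linear equations in $u$ and $v$ and solve for $v$.

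For the first equation, since $\pi$ contains $q^2+q+1$ points, exactly one of which is red (namely $N$), I immediately get
\begin{equation*}
u + v = q^2 + q.
\end{equation*}
For the second, I double count incident pairs $(P,\Pi)$ with $\Pi \in \E$ and $P \in \pi \cap \Pi$. The crucial geometric observation is that any $\Pi \in \E$ avoids $N$ (since solids of $\E$ contain no red point), so $\Pi$ cannot contain the plane $\pi$; thus $\Pi \cap \pi$ is a line, and moreover this line misses $N$. So each $\Pi \in \E$ contributes exactly $q+1$ points to the count, all of which are white or black. Counting instead by points in $\pi$ and using assumption (I), the tally becomes
\begin{equation*}
u \cdot \tfrac12 q^3 + v \cdot \tfrac12(q^3-q^2) = |\E|(q+1) = \tfrac12 q^2(q^2-1)(q+1).
\end{equation*}

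Dividing by $\tfrac12 q^2$ and substituting $u = q^2+q-v$ from the first equation reduces the second equation to a single linear equation in $v$. A routine simplification — expanding $(q-1)(q+1)^2 = q^3+q^2-q-1$ — then yields $v = q+1$, which is what we wanted. There is no real obstacle here; the argument is a standard two-way count, and the only point requiring a moment of thought is checking that every solid of $\E$ meets $\pi$ in a full line (rather than containing $\pi$), which follows from the fact that $N \in \pi$ lies in no solid of $\E$.
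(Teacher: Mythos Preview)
Your argument is correct. It differs from the paper's proof in a genuine way, though both are short double counts.

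The paper counts black points via the pencil of $q+1$ solids through $\pi$: since all of these contain the red point they lie in $\T$, and by Lemma~\ref{lem:HT-size} each carries exactly $q^2+q+1$ black points; a partition argument then forces the number of black points in $\pi$ to be $q+1$. Your route instead counts pairs $(P,\Pi)$ with $\Pi\in\E$ and $P\in\pi\cap\Pi$, using only assumption~(I) and the value of $|\E|$. The advantage of your version is that it avoids Lemma~\ref{lem:HT-size} entirely, so it is logically more self-contained; indeed it mirrors the technique the paper uses in Case~A (Lemma~\ref{A-red-noblack}). The paper's version, on the other hand, is slightly more geometric, exploiting the fact that the solids through $\pi$ cover $\PG(4,q)$ and all belong to $\T$. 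Both arguments rely on Lemma~\ref{item:pt-size} for the uniqueness of the red point.
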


\begin{proof}
Let $\pi$ be a plane through the red point, and let $x$ be the number of black points in $\pi$. By Lemma~\ref{item:pt-size}, there are $q^3+q^2+q+1-x$ black points not in $\pi$. Also, the $q+1$ solids containing $\pi$ lie in $\T$ and partition the remaining black points. Using this we count in two ways incident pairs $(P,\Pi)$ where $\Pi$ is a solid in $\T$ containing $\pi$ and $P$ is a black point in $\Pi$. By Lemma \ref{lem:HT-size}, we have
\begin{eqnarray*}
x.(q+1)+(q^3+q^2+q+1-x).1&=&(q+1).(q^2+q+1)
\end{eqnarray*}
and so $x=q+1$ as required.
\end{proof}
\begin{lemma} \label{lem:tgt-ln-N}
If $|\E|=\frac12 q^2 (q^2-1)$, then every line through the unique red point of $\PG(4,q)$ contains exactly one black point.
\end{lemma}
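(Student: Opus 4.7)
The plan is to prove this by a simple double count of black points using the pencil of planes through the given line. Let $\ell$ be a line through the unique red point $N$, and let $x$ denote the number of black points on $\ell$. The goal is to show $x=1$.

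First I would observe that every plane $\pi$ containing $\ell$ necessarily contains $N$, so by Lemma~\ref{lem:pl-thru-N} it contains exactly $q+1$ black points; subtracting the $x$ black points lying on $\ell$, each such plane contains $q+1-x$ black points in $\pi\setminus\ell$. Second, the $q^2+q+1$ planes through $\ell$ partition the points of $\PG(4,q)\setminus\ell$, so summing over these planes accounts for every black point off $\ell$ exactly once. By Lemma~\ref{item:pt-size} there are $q^3+q^2+q+1-x$ black points off $\ell$, giving the identity
\begin{eqnarray*}
(q^2+q+1)(q+1-x)&=&q^3+q^2+q+1-x.
\end{eqnarray*}

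Expanding the left side yields $q^3+2q^2+2q+1-x(q^2+q+1)$, and a short rearrangement collapses the equation to $q^2+q=x(q^2+q)$, so $x=1$. There is no real obstacle here: the fact that a plane through $\ell$ automatically contains $N$ is what makes Lemma~\ref{lem:pl-thru-N} applicable, and after that the count is a one-line cancellation. The only thing to be careful about is to use the correct partition (planes through $\ell$ partition the points outside $\ell$, not all points of $\PG(4,q)$), which is why the black points on $\ell$ must be subtracted before counting.
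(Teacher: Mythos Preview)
Your proof is correct and is essentially identical to the paper's own argument: both count the black points by partitioning $\PG(4,q)$ via the pencil of planes through $\ell$, using Lemma~\ref{lem:pl-thru-N} for the plane count and Lemma~\ref{item:pt-size} for the total, and arrive at the same equation (your $(q^2+q+1)(q+1-x)=q^3+q^2+q+1-x$ is just the paper's $y+(q^2+q+1)(q+1-y)=q^3+q^2+q+1$ with $y$ moved to the other side).
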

\begin{proof}
Let $\ell$ be a line through the red point and suppose $\ell$ contains $y$ black points. By Lemma~\ref{lem:pl-thru-N}, each plane containing $\ell$ contains $q+1-y$ further black points. Counting the total number of black points of $\PG(4,q)$ in two ways: by looking at planes through $\ell$ and by using Lemma~\ref{item:pt-size} gives
$$y+(q^2+q+1)(q+1-y)=q^3+q^2+q+1.$$ Hence $y=1$ as required.
\end{proof}

We now consider assumption (II), that a plane of $\PG(4,q)$ lies in $0$, $\frac12q$ or $q$ solids in $\E$, and determine properties of each class of planes.

\begin{lemma}\Label{lem:plane-char}
Suppose $|\E|=\frac12 q^2 (q^2-1)$, and let $\pi$ be a plane of $\PG(4,q)$ not containing the red point.
Then one of the following holds. 
\begin{enumerate}
\item  $\pi$ lies in $0$ solids of $\E$, $q$ solids of $\H$ and $\pi$ contains $2q+1$ black points.
\item $\pi$ lies in $\frac12q$ solids of $\E$,  $\frac12q$ solids of $\H$ and  $\pi$ contains $q+1$ black points.
\item  $\pi$ lies in $q$ solids of $\E$,  $0$ solids of $\H$ and  $\pi$ contains $1$ black point.
\end{enumerate}
\end{lemma}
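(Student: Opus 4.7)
The plan is to first pin down how the $q+1$ solids through $\pi$ split among $\E$, $\H$, $\T$, and then recover the number of black points in $\pi$ by a single double count.

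For the first step, I would note that a plane $\pi$ lies in exactly $q+1$ solids of $\PG(4,q)$, each of the form $\langle \pi, P\rangle$ for some point $P$ off $\pi$. Since by Lemma~\ref{item:pt-size} there is a unique red point $R$ and $\pi$ does not contain $R$, exactly one of these $q+1$ solids (namely $\langle \pi, R\rangle$) lies in $\T$, and the remaining $q$ solids are partitioned between $\E$ and $\H$. Writing $e$ and $h$ for the number of solids through $\pi$ lying in $\E$ and $\H$ respectively, we have $e+h=q$, and by assumption~(II) we know $e\in\{0,\tfrac12 q, q\}$, giving the three pairings $(e,h)\in\{(0,q),(\tfrac12 q,\tfrac12 q),(q,0)\}$ claimed in the statement.

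For the second step, let $b$ be the number of black points of $\pi$; then $\pi$ has $q^2+q+1-b$ white points and $0$ red points. I would count incident pairs $(P,\Sigma)$ with $\Sigma\in\E$ and $P\in\pi\cap\Sigma$ in two ways. By points, using assumption~(I), the count is
\[
(q^2+q+1-b)\cdot\tfrac12 q^3+b\cdot\tfrac12(q^3-q^2).
\]
By solids, each of the $e$ solids of $\E$ containing $\pi$ contributes $q^2+q+1$ points of $\pi$, while each of the remaining $|\E|-e$ solids of $\E$ meets $\pi$ in a line (it cannot be disjoint from $\pi$ in $\PG(4,q)$, nor can it contain $\pi$) and contributes $q+1$ points. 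Setting the two expressions equal and using $|\E|=\tfrac12 q^2(q^2-1)$, the algebra simplifies to $b=2q+1-2e$.

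Substituting the three admissible values of $e$ then gives $b=2q+1$, $q+1$, $1$ respectively, matching the three cases of the lemma. The argument involves no real obstacle beyond bookkeeping; the only point requiring a little care is verifying that a solid of $\E$ that does not contain $\pi$ must meet $\pi$ in a genuine line (so that the ``by solids'' tally uses $q+1$ consistently), which follows because a solid and a plane in $\PG(4,q)$ always intersect in at least a line.
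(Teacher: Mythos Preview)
Your proof is correct, and it reaches the same key relation $b=2q+1-2e$ as the paper, but via a different double count. The paper instead counts the total number of black points of $\PG(4,q)$ by summing over the $q+1$ solids through $\pi$: one solid in $\T$ contributes $q^2+q+1-x$ new black points, each of the $e$ solids in $\E$ contributes $q^2+1-x$, and each of the $h$ solids in $\H$ contributes $q^2+2q+1-x$ (using Lemmas~\ref{lem:E-size} and~\ref{lem:HT-size}); equating with $q^3+q^2+q+1$ from Lemma~\ref{item:pt-size} gives $x=2q+1-2e$.

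Your count of pairs $(P,\Sigma)$ with $\Sigma\in\E$ and $P\in\pi\cap\Sigma$ is a genuine alternative: it avoids any appeal to Lemma~\ref{lem:HT-size} (the black-point counts in solids of $\H$ and $\T$), using only assumption~(I) and the value of $|\E|$. The paper's route, on the other hand, stays entirely within black-point arithmetic and never needs to invoke the white-point contribution or the global size $|\E|$. Both are short and clean; yours is slightly more economical in its prerequisites.
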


\begin{proof}
Let $\pi$ be a plane not containing the red point, and suppose $\pi$ contains $x$ black points. Let $e $ be the number of solids in $\E$ through $\pi$ and let $h $ be the number of solids in $\H$ through $\pi$. As each solid in $\T$ contains the red point, there is exactly one solid $\Sigma$ in $\T$ through $\pi$, so $e +h =q$. We count the black points in two ways, firstly using Lemma~\ref{item:pt-size}, and secondly via solids of $\T,\E,\H$ containing $\pi$ using Lemma \ref{lem:HT-size}. We have
\begin{eqnarray*}
q^3+q^2+q+1&=&x\,+\,1.(q^2+q+1-x)+e .(q^2+1-x)+h .(q^2+2q+1-x).
\end{eqnarray*}
Simplifying gives $x=2q+1-2e$. By assumption (II), there are three possibilities for $e$, namely $0$, $q/2$, and $q+1$. If  $e =0$, then  $x=2q+1$. If $e =q/2$, then $x=q+1$. If $e =q$, then $x=1$.
\end{proof}

Hence there are three different types of planes of $\PG(4,q)$. We shall call a plane a \emph{1-plane, $(q+1)$-plane} or \emph{$(2q+1)$-plane} according to the number of black points it contains.

\begin{lemma}\Label{lem:ovoid}
If $|\E|=\frac12 q^2 (q^2-1)$, and $\Pi\in\E$, then the black points in $\Pi$ form
 an ovoid.
\end{lemma}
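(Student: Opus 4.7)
The plan is to show that the $q^2+1$ black points of $\Pi$, whose cardinality is given by Lemma~\ref{lem:E-size}, have the property that no three are collinear; this is exactly the definition of an ovoid of the solid $\Pi\cong\PG(3,q)$.

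First I would classify the planes of $\Pi$ by their black-point count. Since $\Pi\in\E$ contains no red point, no plane $\pi\subset\Pi$ contains the red point, so Lemma~\ref{lem:plane-char} applies to $\pi$. Moreover $\pi$ lies in $\Pi\in\E$, so $\pi$ is contained in at least one solid of $\E$, which rules out case~(1) of Lemma~\ref{lem:plane-char}. Hence every plane of $\Pi$ contains either $q+1$ black points (case~(2)) or exactly $1$ black point (case~(3)).

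Next I would perform a double count along an arbitrary line $\ell\subset\Pi$. Suppose $\ell$ contains $k$ black points, and among the $q+1$ planes of $\Pi$ through $\ell$, let $b$ be the number containing $q+1$ black points, so $(q+1)-b$ planes contain $1$ black point. Since these planes partition the black points of $\Pi$ lying outside $\ell$, we obtain
\begin{equation*}
q^2+1 \;=\; k \;+\; b(q+1-k) \;+\; (q+1-b)(1-k),
\end{equation*}
which simplifies to $b=q-1+k$. Because $0\le b\le q+1$, this forces $k\le 2$, i.e.\ every line of $\Pi$ meets the black points in at most two points.

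Combining this with the count $q^2+1$ of black points in $\Pi$ from Lemma~\ref{lem:E-size} gives that the black points in $\Pi$ form an ovoid of $\PG(3,q)$. The only real obstacle is the reduction via Lemma~\ref{lem:plane-char}, which rests on the observation that $\Pi\in\E$ forces every plane of $\Pi$ to be disjoint from the unique red point; once the two possible plane types in $\Pi$ are pinned down, the rest is a routine linear count.
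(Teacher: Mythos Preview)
Your proof is correct and follows essentially the same approach as the paper: both use Lemma~\ref{lem:plane-char} to rule out $(2q+1)$-planes in $\Pi$ (since every plane of $\Pi$ lies in the solid $\Pi\in\E$), then count black points of $\Pi$ via the pencil of planes through a line $\ell$ to bound the number of black points on $\ell$ by $2$. The only cosmetic difference is that the paper argues by contradiction (assuming $\ell$ has $3+x$ black points forces every plane through $\ell$ to be a $(q+1)$-plane, and the resulting count yields $x<0$), whereas you keep the $1$-plane contribution in the count and solve $b=q-1+k$ directly; your version even gives the extra information that a line with $k\in\{0,1,2\}$ black points lies on exactly $q-1+k$ planes of the $(q+1)$ type inside $\Pi$.
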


\begin{proof}
Let $\Pi\in\E$ and suppose there exists a line $\ell$ in $\Pi$ containing $3+x$ black points for some $x\ge 0$.
Let $\pi$ be a plane of $\Pi$ containing $\ell$. There are $q+1$ solids of $\PG(4,q)$ containing $\pi$, one contains the red point and one is $\Pi$.
As $\pi$ contains at least three black points, by Lemma~\ref{lem:plane-char}, $\pi$ is not a 1-plane. As $\pi$ lies in a solid of $\E$, by Lemma~\ref{lem:plane-char}, $\pi$ is a $(q+1)$-plane.
 Counting the black points in $\Pi$ in two ways,  firstly using Lemma~\ref{lem:E-size} and secondly by counting the planes of $\Pi$ through $\ell$ gives
$$q^2+1=(3+x)+(q+1)(q+1-(3+x)),$$
giving a negative value of $x$, a contradiction.
Thus the lines in $\Pi$ contain at most two black points. Hence the $q^2+1$ black points in $\Pi$ are no three collinear, and form
 an ovoid.
\end{proof}

\begin{lemma}\Label{lem:pl-thro-ell-inH} Suppose
$|\E|=\frac12 q^2 (q^2-1)$.
Let $\Pi\in\H$ and let $\ell$ be a line in $\Pi$ which contains $y$ black points, so $0\leq y\leq q+1$.
Then the number of $(q+1)$-planes and $(2q+1)$-planes of $\Pi$ containing $\ell$ is $q+1-y$ and $y$ respectively.
\end{lemma}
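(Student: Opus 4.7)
The plan is to use a double-count of black points of $\Pi$ obtained by partitioning the points off $\ell$ among the $q+1$ planes of $\Pi$ through $\ell$, combined with the fact that $\Pi\in\H$ forces every plane of $\Pi$ to be either a $(q+1)$-plane or a $(2q+1)$-plane.

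First I would observe that since $\Pi\in\H$ does not contain the unique red point, no plane of $\Pi$ contains the red point, so by Lemma~\ref{lem:plane-char} every plane of $\Pi$ is a $1$-plane, $(q+1)$-plane or $(2q+1)$-plane. The key preliminary step is to rule out $1$-planes: by Lemma~\ref{lem:plane-char}, a $1$-plane lies in $0$ solids of $\H$, but every plane of $\Pi$ lies in at least one solid of $\H$, namely $\Pi$ itself. Hence no plane of $\Pi$ can be a $1$-plane. (This is where the hypothesis $\Pi\in\H$ really bites, and handles the cases $y\in\{0,1\}$ that might otherwise allow $1$-planes geometrically.)

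Next, let $b$ and $c$ denote the number of $(q+1)$-planes and $(2q+1)$-planes of $\Pi$ through $\ell$ respectively. Since there are $q+1$ planes of $\Pi$ containing $\ell$, we have $b+c=q+1$. Now count the black points of $\Pi$ by inclusion in the planes through $\ell$: each of the $y$ black points on $\ell$ lies in all $q+1$ such planes, while each black point of $\Pi$ off $\ell$ lies in exactly one such plane. By Lemma~\ref{lem:HT-size}, $\Pi$ contains $(q+1)^2$ black points, so summing the black-point counts over the $q+1$ planes gives
\begin{equation*}
b(q+1)+c(2q+1)\;=\;y(q+1)+\bigl[(q+1)^2-y\bigr]\;=\;(q+1)^2+yq.
\end{equation*}

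The final step is routine linear algebra: substituting $c=q+1-b$ into the displayed equation and simplifying yields $b=q+1-y$ and hence $c=y$. The only genuinely delicate point in the whole argument is the exclusion of $1$-planes, and once that is in hand everything else is just the two-equation linear system. I do not anticipate any real obstacle.
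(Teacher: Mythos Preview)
Your proposal is correct and follows essentially the same approach as the paper: rule out $1$-planes in $\Pi$ via Lemma~\ref{lem:plane-char}, then solve the two-equation linear system coming from the plane count through $\ell$ and the black-point count using Lemma~\ref{lem:HT-size}. Your double-count $b(q+1)+c(2q+1)=(q+1)^2+yq$ is just an algebraic rearrangement of the paper's $y+n(q+1-y)+m(2q+1-y)=(q+1)^2$, and your justification for excluding $1$-planes is in fact more explicit than the paper's.
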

%

\begin{proof}
Let $\Pi\in\H$ and let $\ell$ be a line in $\Pi$ which contains $y$ black points, $0\leq y\leq q+1$.
Let $n$ be the number of $(q+1)$-planes of $\Pi$ through $\ell$ and let
 $m$ be the number of $(2q+1)$-planes of $\Pi$ through $\ell$. It follows from Lemma \ref{lem:plane-char} that there are no $1$-planes in $\Pi$, so
\begin{eqnarray}
n+m&=&q+1.\label{eqn-bong}
\end{eqnarray}
By Lemma~\ref{lem:HT-size}, $\Pi$ contains $(q+1)^2$ black points, so counting the black points of $\Pi$ gives
\begin{eqnarray*}
y+n.(q+1-y)+m.(2q+1-y)&=&(q+1)^2.
\end{eqnarray*}
Simplifying using (\ref{eqn-bong}) gives $m=y$ and $n=q+1-y$ as required.
\end{proof}

\begin{lemma}\Label{lem:hyp}
Suppose $|\E|=\frac12 q^2 (q^2-1)$. If $\Pi\in\H$, and $q>2$, then the black points in $\Pi$ form a hyperbolic quadric.
\end{lemma}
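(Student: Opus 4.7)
The plan is to identify the black points of $\Pi$ as the point set of a hyperbolic quadric by exhibiting its two rulings inside the set of $(q+1)$-secants of $\Pi$.

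First I would show that every line in $\Pi$ meets the black points in $0$, $1$, $2$, or $q+1$ points. Given a line $\ell\subset\Pi$ with $y$ black points where $y\leq q$, Lemma~\ref{lem:pl-thro-ell-inH} supplies a $(q+1)$-plane $\pi$ of $\Pi$ through $\ell$; by Lemma~\ref{lem:plane-char}, $\pi$ lies in $q/2\geq 1$ solids of $\E$, and for any $\Pi'\in\E$ containing $\pi$ we have $\Pi\cap\Pi'=\pi$. The black points of $\pi$ are thus the plane section of the ovoid of $\Pi'$ from Lemma~\ref{lem:ovoid}, so they form an oval, forcing $y\in\{0,1,2\}$. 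Combined with the possibility $y=q+1$, every line of $\Pi$ has $y\in\{0,1,2,q+1\}$.

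I would then analyse the two plane types in $\Pi$. For a $(2q+1)$-plane $\pi$, every line of $\pi$ has $y\geq 1$ (since $\pi$ itself is a $(2q+1)$-plane through any of its lines), so line types in $\pi$ reduce to $\{1,2,q+1\}$; a three-way count against total lines, black-point incidences, and ordered pairs of black points yields exactly two $(q+1)$-secants of $\pi$, meeting in a point. A parallel double count of lines through a fixed black point $P\in\Pi$, splitting over $(q+1)$-planes through $P$ (each containing an oval, contributing $1$ tangent and $q$ secants through $P$) and $(2q+1)$-planes through $P$ (further split by whether $P$ is the common point of the two generators, contributing $2$ generators and $q-1$ tangents, or lies on exactly one, contributing $1$ generator and $q$ secants), pins down that exactly two $(q+1)$-secants pass through every black point.

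Fixing one $(q+1)$-secant $\ell$, I would take the other $(q+1)$-secant $\ell_P$ through each black point $P$ of $\ell$, obtaining a family $\mathcal{F}_2=\{\ell_P:P\in\ell\}$ of $q+1$ lines. These are pairwise skew: if $\ell_{P_1}$ and $\ell_{P_2}$ met at a point $Q$, then $Q\notin\ell$ (since $\ell_{P_i}\cap\ell=\{P_i\}$ and $P_1\neq P_2$), and then $\ell,\ell_{P_1},\ell_{P_2}$ would span a plane with at least $3q$ black points, contradicting Lemma~\ref{lem:plane-char}. Since $\mathcal{F}_2$ covers $(q+1)^2$ black points with pairwise disjoint lines, it partitions them; the remaining $q+1$ $(q+1)$-secants of $\Pi$ form a second family $\mathcal{F}_1$ which, by the uniqueness of the two generators through each black point, is likewise pairwise skew and has every line meeting every line of $\mathcal{F}_2$.

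Finally, I would invoke the classical fact that three pairwise skew lines of $\PG(3,q)$ lie on a unique hyperbolic quadric, whose opposite regulus is the set of their $q+1$ common transversals. Since each line of $\mathcal{F}_2$ is a transversal to any three lines of $\mathcal{F}_1$ and $|\mathcal{F}_2|=q+1$, the families $\mathcal{F}_1$ and $\mathcal{F}_2$ are the two rulings of this hyperbolic quadric, and $\bigcup\mathcal{F}_1$ coincides with the set of black points of $\Pi$. The most delicate step is the double count giving exactly two $(q+1)$-secants through each black point, which requires careful handling of the two sub-cases for $(2q+1)$-planes through $P$ and leads to a linear system whose unique integer solution yields the desired value; the assumption $q>2$ underwrites the non-negativity and divisibility throughout.
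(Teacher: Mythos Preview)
Your first step—showing that every line of $\Pi$ meets the black set in $0$, $1$, $2$, or $q+1$ points via a $(q+1)$-plane through $\ell$ and Lemmas~\ref{lem:plane-char} and~\ref{lem:ovoid}—is exactly what the paper does. After that the two proofs diverge. The paper simply invokes Tallini's classification of sets of type $(0,1,2,q+1)$ in $\PG(3,q)$: once the line-intersection numbers are established, Tallini (for $q>2$) forces the $(q+1)^2$ black points to form a quadric, necessarily hyperbolic.

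Your route is genuinely different and self-contained: you reconstruct the hyperbolic quadric by hand through its two reguli. The three-way count in a $(2q+1)$-plane does give exactly two generators meeting in a point, and the line--plane double count through a fixed black point $P$ does close to give $c_P=2$; the extra relation you need beyond the obvious two on lines through $P$ is that a tangent through $P$ lies in exactly one $(2q+1)$-plane of $\Pi$ (Lemma~\ref{lem:pl-thro-ell-inH} with $y=1$), which forces the number of $(2q+1)$-planes through $P$ in which $P$ is the vertex to equal $c_P-1$, and then the system collapses to $c_P=2$. The skewness argument for $\mathcal{F}_2$, the deduction that $\mathcal{F}_1$ is the complementary family of $q+1$ transversals, and the final appeal to the regulus through three skew lines are all sound. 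Your argument is substantially longer but avoids the external citation entirely; the paper's proof is a two-line appeal to Tallini once the line types are in hand.
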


\begin{proof}
Let $\Pi\in\H$. We suppose $\ell$ is a line of $\Pi$ which contains $y$ black points with $3\leq y\leq q$, and work to a contradiction.
By Lemma~\ref{lem:pl-thro-ell-inH}, the number of $(q+1)$-planes containing $\ell$ is $q+1-y\geq 1$. So there is a $(q+1)$-plane $\pi$ in $\Pi$ containing $\ell$. By Lemma~\ref{lem:plane-char}, $\pi$ lies in a solid of $\E$. Hence by Lemma~\ref{lem:ovoid}, the black points in $\pi$ form an oval. As $\ell$ is a line of $\pi$, this contradicts $3\leq y\leq q$. Hence every line of $\Pi$ contains either $0$, $1$, $2$ or $q+1$ black points. By Lemma~\ref{item:pt-size}, there are $q^2+2q+1$ black points in $\Pi$. It follows from a characterisation of sets of type $(0,1,2,q+1)$ due to Tallini \cite{tallini}, that if $q>2$, the black points in $\Pi$ form a quadric containing $q^2+2q+1$ points, that is, a hyperbolic quadric.
\end{proof}
%
%
%

%
%

\begin{lemma}\Label{lem:quadcone}
Suppose $|\E|=\frac12 q^2 (q^2-1)$. If $\Pi\in\T$, and $q>2$, then each line in $\Pi$ contains $0$, $1$, $2$ or $q+1$ black points.
 \end{lemma}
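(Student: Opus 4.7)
The plan is to split on whether $\ell$ passes through the unique red point $N$. If $\ell \ni N$, then Lemma \ref{lem:tgt-ln-N} already gives that $\ell$ contains exactly one black point, so $y=1$ and we are done. So the real work is the case $\ell \not\ni N$, and the strategy there is to suppose for contradiction that the number $y$ of black points on $\ell$ satisfies $3 \le y \le q$ and derive a conflict with the ovoid structure established in Lemma \ref{lem:ovoid}.

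First I would look at the $q+1$ planes of $\Pi$ through $\ell$. Exactly one of them, namely $\langle N,\ell\rangle$, contains the red point, and by Lemma \ref{lem:pl-thru-N} it has $q+1$ black points. Each of the remaining $q$ planes through $\ell$ does not contain $N$, so by Lemma \ref{lem:plane-char} it is a $1$-plane, a $(q+1)$-plane, or a $(2q+1)$-plane; since they each contain $\ell$ with $y\ge 3$ black points, none of them can be a $1$-plane. Let $a$ and $b$ denote, respectively, the number of $(q+1)$-planes and $(2q+1)$-planes through $\ell$ that avoid $N$, so $a+b=q$.

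Next I would count the $q^2+q+1$ black points of $\Pi$ (from Lemma \ref{lem:HT-size}) by summing over the $q+1$ planes through $\ell$ and subtracting the over-counting on $\ell$ itself:
\begin{equation*}
q^2+q+1 \;=\; y \,+\, (q+1-y) \,+\, a(q+1-y) \,+\, b(2q+1-y).
\end{equation*}
Using $a+b=q$ this simplifies to $b=y-1$ and $a=q-y+1$. Since $3\le y\le q$ we obtain $a\ge 1$, so there is at least one $(q+1)$-plane $\pi$ through $\ell$ which does not contain $N$.

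The finishing step is where Lemma \ref{lem:ovoid} pays off: by Lemma \ref{lem:plane-char} the plane $\pi$ lies in at least one solid $\Pi' \in \E$, and by Lemma \ref{lem:ovoid} the black points of $\Pi'$ form an ovoid, so the black points of the plane $\pi$ are contained in a planar section of that ovoid and in particular no three of them are collinear. But $\ell\subset\pi$ carries $y\ge 3$ black points, a contradiction. Hence no line $\ell$ of $\Pi$ can have $3\le y\le q$ black points, leaving only the values $0,1,2,q+1$. The main obstacle is simply making sure the incidence count in the display above is set up correctly, in particular separating out the unique plane through $\ell$ that contains $N$; once that is done the contradiction via the ovoid in Lemma \ref{lem:ovoid} is immediate.
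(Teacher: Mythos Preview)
Your proof is correct, but it takes a longer route than the paper's. The paper simply observes that if $\ell$ does not pass through the red point $N$, then any plane $\pi$ through $\ell$ avoiding $N$ (of which there are plenty) lies, by Lemma~\ref{lem:plane-char}, in some solid of $\E$ or of $\H$; invoking Lemma~\ref{lem:ovoid} in the first case and Lemma~\ref{lem:hyp} in the second immediately forces $\ell$ to meet the black set in $0$, $1$, $2$ or $q+1$ points. No counting of planes through $\ell$ inside $\Pi$ is needed.

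Your argument instead counts the $(q+1)$- and $(2q+1)$-planes of $\Pi$ through $\ell$ to guarantee the existence of a $(q+1)$-plane, so that you can appeal to Lemma~\ref{lem:ovoid} alone and never touch Lemma~\ref{lem:hyp}. That is a genuine, if small, trade-off: you avoid relying on the Tallini classification used inside Lemma~\ref{lem:hyp}, at the cost of the extra incidence count. Both approaches are valid; the paper's is shorter because Lemma~\ref{lem:hyp} is already in hand.
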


\begin{proof}
Let $\Pi\in\T$ and $\ell$ a line of $\Pi$. Recall from Lemma~\ref{item:pt-size} that there is a unique red point which lies in each solid of $\T$. If $\ell$ contains the red point, then by Lemma~\ref{lem:tgt-ln-N}, $\ell$ contains exactly one black point. If $\ell$ does not contain the red point, then $\ell$ lies in a plane $\pi$ not through the red point. By Lemma~\ref{lem:plane-char}, $\pi$ lies in a solid of $\H$ or $\E$. Hence by Lemma~\ref{lem:ovoid} or Lemma~\ref{lem:hyp}, each line in $\pi$ contains
$0$, $1$, $2$ or $q+1$ black points.
Thus every line in $\Pi$ contains $0$, $1$, $2$ or $q+1$ black points as required.
\end{proof}

\section{Proof of main results}

We now complete the proofs of the main results. 

{\bfseries Theorem~\ref{mainthm-1} } \emph{Let $\E$ be a set of solids   in $\PG(4,q)$, $q$ even, $q>2$, such that
 \begin{itemize}
\item[(I)] every point of $\PG(4,q)$ lies in either $0$, $\frac12q^3$ or $\frac12(q^3-q^2)$ solids of $\E$, and
\item[(II)] every plane of $\PG(4,q)$ lies in either $0$, $\frac12q$ or $q$ solids of $\E$.
\end{itemize}
Then
either $\E$ is the set of solids that are disjoint from a hyperoval, or $\E$ is the set of solids that meet a non-singular quadric $Q(4,q)$ in an elliptic quadric.}

\begin{proof}
Let $\E$ be a set of solids in $\PG(4,q)$, $q$ even, $q>2$, satsifying satisfying Assumptions (I) and (II).
By Lemma~\ref{lem:A-size1}, $\mathsfe$ is congruent to $0$ or $-1$ modulo $q$. We consider these two cases separately.

\emph{Case A:\ }If
$\mathsfe$ is congruent to $0$ modulo $q$, then by Lemma~\ref{lem:A-size1-strong}, $|\E|=\frac12 q^3(q-1)$. By Lemma~\ref{A-red-hyperoval}, the red points form a hyperoval $\mathcal O$. Further, by    Lemma~\ref{A-red-hyperoval-B}, $\E$ is the set of solids that are disjoint from  $\mathcal O$.

\emph{Case B:\ } If
$\mathsfe$ is congruent to $-1$ modulo $q$, then by Lemma~\ref{lem:E-size}, $|\E|=\frac12 q^2 (q^2-1)$.
Let $\mathcal B$ be the set of black points. Let $\ell$ be a line of $\PG(4,q)$ and let $\Sigma$ be a solid containing $\ell$, so $\Sigma$ lies in one of $\E,\H,\T$. Hence
by one of Lemmas~\ref{lem:ovoid},~\ref{lem:hyp},~\ref{lem:quadcone}, $\ell$ contains
 $0$, $1$, $2$ or $q+1$ black points.
By Lemma~\ref{item:pt-size}, $|\mathcal B|=q^3+q^2+q+1$. It follows from a characterisation of sets of type $(0,1,2,q+1)$ due to Tallini \cite{tallini} that as $q>2$,   $\mathcal B$ is the set of points of either a  non-singular quadric  or a solid of $\PG(4,q)$.
By Lemmas~\ref{lem:ovoid}~and~\ref{lem:hyp}, the black points do not lie in a single solid, so the
  latter case does not occur. By Lemma \ref{lem:ovoid}, $\E$ is a set of solids of $\PG(4,q)$ that meet a  non-singular quadric  in an ovoid, which is necessarily an elliptic quadric.
\end{proof}

 \begin{remark}
 \begin{itemize}
 \item  In  case A, it follows that $\T$ is the set of  solids  that meet  the  hyperoval $\mathcal O$, and $\H=\emptyset.$
 Furthermore, if $\pi$ is the plane containing   $\mathcal O$, then the white points are the  points in $\pi\setminus\mathcal O$, and the black points are the points of $\PG(4,q)\setminus\pi$.
\item In   case B, it follows that $\T$ is the set of solids that meet the non-singular quadric in a quadratic cone, and $\H$ is the set of solids that meet the non-singular  quadric in a hyperbolic quadric.
\end{itemize}
\end{remark}

\begin{lemma}\label{lem:new}
\begin{enumerate}
\item Let $\E$ be the set of solids of $\PG(4,q)$ that are disjoint from a fixed hyperoval. Then a line of $\PG(4,q)$ lies in either $0$, $\frac12 q(q-1)$, $\frac12 q^2$, or $q^2$  solids of $\E$.
\item  Let $\E$ be the set of solids of $\PG(4,q)$ that meet a non-singular quadric $Q(4,q)$ in an elliptic quadric. Then a line of $\PG(4,q)$ lies in either $0$, $\frac12 q(q-1)$, $\frac12q^2$,  or  $\frac12 q(q+1)$ solids of $\E$.
\end{enumerate}
\end{lemma}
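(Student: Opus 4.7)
The plan for part~(1) is to classify a line $\ell$ of $\PG(4,q)$ by its position with respect to the hyperoval plane $\pi$ and the hyperoval $\O$, recalling from Case~A that $\E$ is precisely the set of solids disjoint from $\O$. If $\ell$ meets $\O$ then every solid through $\ell$ meets $\O$ and the count is $0$; otherwise there are three sub-cases. First, if $\ell\subset\pi$ and $\ell$ is external to $\O$, the $q+1$ solids through $\pi$ all meet $\O$ while the other $q^2$ solids through $\ell$ meet $\pi$ only in $\ell$ and therefore lie in $\E$, giving $q^2$. Second, if $\ell\cap\pi=\emptyset$, the assignment $\Sigma\mapsto\Sigma\cap\pi$ is a bijection from solids through $\ell$ onto lines of $\pi$, a solid lies in $\E$ iff the corresponding line is external to $\O$, and $\pi$ contains $\frac12 q(q-1)$ such lines. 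The subtle sub-case is $\ell\cap\pi=\{P\}$ with $P\in\pi\setminus\O$: here the unique solid $\langle\ell,\pi\rangle$ meets $\O$, and every other solid through $\ell$ meets $\pi$ in a line through $P$; for each such line $m$ there are exactly $q$ solids through $\ell$ meeting $\pi$ in $m$ (the $q+1$ solids through the plane $\langle\ell,m\rangle$ minus $\langle\ell,\pi\rangle$), and since the $q+2$ points of $\O$ pair up along secants from $P$ there are $\frac12 q$ external lines through $P$, yielding the count $\frac12 q^2$.

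For part~(2), let $N$ be the nucleus of $Q(4,q)$, which is the unique red point by Lemma~\ref{item:pt-size}. If $\ell$ passes through $N$ then every solid through $\ell$ lies in $\T$ and the count is $0$. Otherwise exactly $q+1$ solids through $\ell$ contain the plane $\langle\ell,N\rangle$ and so lie in $\T$, leaving $q^2$ solids of $\E\cup\H$; write $e$ and $h=q^2-e$ for these. Let $b$ be the number of points of $Q(4,q)$ on $\ell$, so $b\in\{0,1,2,q+1\}$ by Lemmas~\ref{lem:ovoid}, \ref{lem:hyp}, \ref{lem:quadcone}. The main step is a double count of incident pairs $(P,\Sigma)$ with $\Sigma$ a solid through $\ell$ and $P$ a black point of $\Sigma$: a black point off $\ell$ lies in exactly $q+1$ such solids (those containing the plane $\langle\ell,P\rangle$), while the per-solid black point counts $q^2+1$, $(q+1)^2$, $q^2+q+1$ for $\E$, $\H$, $\T$ are supplied by Lemmas~\ref{lem:E-size} and~\ref{lem:HT-size}. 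Using $h=q^2-e$ and simplifying yields the clean identity $2e=q(q+1-b)$, and substituting $b\in\{0,1,2,q+1\}$ produces the four values $\frac12 q(q+1)$, $\frac12 q^2$, $\frac12 q(q-1)$, and $0$.

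The hard part is mostly bookkeeping rather than any new idea: in part~(1), the sub-case $\ell\cap\pi=\{P\}$ requires care because solids through $\ell$ no longer biject with lines of $\pi$ through $P$, so one must track the multiplicity $q$ correctly; in part~(2), obtaining the clean identity $2e=q(q+1-b)$ depends on invoking the correct per-solid black point counts from Lemmas~\ref{lem:E-size} and~\ref{lem:HT-size} at the right moment in the double count, and on having already restricted the possible values of $b$ via the line-intersection lemmas.
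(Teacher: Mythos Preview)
Your proof is correct. Part~(1) follows the paper's case analysis exactly (the paper also splits on whether $\ell$ meets $\O$, lies in $\pi$, meets $\pi$ in one point off $\O$, or misses $\pi$, and obtains the same counts by the same reasoning).

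For part~(2) the paper gives no argument at all beyond ``straightforward counting in $Q(4,q)$'', so your double count is strictly more informative. The identity $2e=q(q+1-b)$ is a clean way to handle all four line types uniformly, and the computation checks out. One stylistic remark: since Lemma~\ref{lem:new}(2) is stated as a fact about an actual $Q(4,q)$ rather than about a set $\E$ satisfying (I) and (II), it would be slightly more direct to cite standard quadric facts (lines meet $Q(4,q)$ in $0,1,2,q+1$ points; elliptic, hyperbolic, and tangent solids contain $q^2+1$, $(q+1)^2$, $q^2+q+1$ quadric points respectively; the nucleus lies on every tangent hyperplane) rather than invoking Lemmas~\ref{lem:E-size}, \ref{lem:HT-size}, \ref{lem:ovoid}, \ref{lem:hyp}, \ref{lem:quadcone}. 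Your route is not circular---those lemmas do apply once one knows the elliptic solids of $Q(4,q)$ satisfy (I) and (II)---but the direct citation avoids that detour.
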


\begin{proof}
For part 1, let $\E$ be the set of solids of $\PG(4,q)$ that are disjoint from a fixed hyperoval $\O$ lying in a plane $\pi$. 
If $\ell$ is any line that contains one or two points of $\O$, then $\ell$ lies in $0$ solids of $\E$. Suppose $\ell$ is a line of $\pi$ that does not meet $\O$. Then $\ell$ lies in $q+1$ solids that contain $\pi$, and $q^2$ solids that meet $\pi$ in $\ell$. That is, $\ell$ lies in $q^2$ solids of $\E$. 
Suppose $\ell$ is not a line of $\pi$, but $\ell$   meets $\pi$ in a point not in $\O$. Then $\ell$ lies in $ 1$ solid that contains $\pi$, 
$\frac12(q+2)q$ solids that meet $\pi$ in a bisecant of $\O$, and $\frac12 q^2$ solids that  
 meet $\pi$ in an exterior line of $\O$. That is, $\ell$ lies in $\frac12 q^2$ solids of $\E$. Finally, suppose $\ell$ is any line that does not meet $\pi$, so each solid through $\ell$ meets $\pi $ in a line.  So $\frac12 (q+2)(q+1)$ solids through $\ell$ meet $\O$ in a secant line, and  $\frac12 q(q-1) $   solids through $\ell$ meet $\O$ in an exterior line. That is, $\ell$ lies in $\frac12 q(q-1) $  solids of $\E$. 
Part 2 is straightforward counting in $Q(4,q)$. 
\end{proof}

{\bfseries Theorem~\ref{mainthm-2}}\ \emph{ Let $\E$ be a set of solids   in $\PG(4,q)$, $q$ even, $q>2$, satsifying conditions (I) and (II) of Theorem~\ref{mainthm-1}. If
 \begin{itemize}
\item[(III)] there exists a line of $\PG(4,q)$ that lies in exactly $\frac12 q(q+1)$ solids of $\E$,
\end{itemize}
then
 $\E$ is the set of solids that meet a non-singular quadric $Q(4,q)$ in an elliptic quadric.}
 
 \begin{proof} As $\E$ satisfies conditions (I) and (II), 
 by Theorem~\ref{mainthm-1}, $\E$ is either the set of solids that are disjoint from a hyperoval, or $\E$ is the set of solids that meet a non-singular quadric $Q(4,q)$ in an elliptic quadric. If $\ell$ is a line that lies in   exactly  $\frac12 q(q+1)$ solids of $\E$, then by Lemma~\ref{lem:new}, $\E$ is not  the set of solids that are disjoint from a hyperoval, hence $\E$ is the set of solids that meet a non-singular quadric $Q(4,q)$ in an elliptic quadric. 
 \end{proof}

\bigskip\bigskip

{\bfseries Author details}

S.G. Barwick. School of Mathematical Sciences, University of Adelaide, Adelaide, 5005, Australia.
susan.barwick@adelaide.edu.au

A.M.W. Hui. Statistics Program, BNU-HKBU United International College, Zhuhai, China.
alicemwhui@uic.edu.hk, huimanwa@gmail.com

W.-A. Jackson. School of Mathematical Sciences, University of Adelaide, Adelaide, 5005, Australia.
wen.jackson@adelaide.edu.au

A.M.W. Hui acknowledges the support of the Young Scientists Fund (Grant No. 11701035) of the National Natural Science Foundation of China.

\end{document}